\newtheorem{theorem}{Theorem}
\newtheorem{corollary}[theorem]{Corollary}
\newtheorem{lemma}[theorem]{Lemma}
\newtheorem{proposition}[theorem]{Proposition}
\newtheorem{example}[theorem]{Example}
\newtheorem{definition}[theorem]{Definition}
\newtheorem{remark}[theorem]{Remark}
\begin{document}

\title{Formal aspects on parametrized topological complexity and its pointed version}
\author{J.M. Garc\'{\i}a-Calcines}
\maketitle

\begin{abstract}
The notion of parametrized topological complexity, introduced by Cohen, Farber and Weinberger, is extended to fibrewise spaces which are not necessarily Hurewicz fibrations. After exploring some formal properties of this extension we also introduce the pointed version of parametrized topological complexity. Finally we give sufficient conditions so that both notions agree. 
\end{abstract}

\section*{Introduction}
Considering a topological space $X$ as the configuration space of a mechanical system, the motion planning problem consists of giving an algorithm which takes a pair of configurations $(x,y)\in X\times X$ as an input, and produces as an output a path in $X$ starting in $x$ and ending in $y.$ If $X^I$ denotes the space of paths in $X$ equipped with the compact-open topology, then such algorithm is just a section of 
the bi-evaluation fibration $\pi :X^I\rightarrow X\times X,$ $\pi (\gamma )=(\gamma (0),\gamma (1)).$ However, this section cannot be continuous unless the space $X$ is contractible, condition which is almost never true. In order to measure the discontinuity of any motion planner in $X$, M. Farber \cite{F} introduced the notion of topological complexity of a path-connected space $X$, $\mbox{TC}(X).$ It is defined as the sectional category (or Schwarz genus) of $\pi :X^I\rightarrow X\times X.$ In other words, $\mbox{TC}(X)$ is the least nonnnegative integer $n$ such that $X\times X$ admits an cover by $n+1$ open subsets $U_0,U_1,...,U_n$ on each of which $\pi $ admits a continuous local section (called local rule, or local algorithm). From the very beginning this numerical homotopy invariant has become of great interest to many researchers working in the field of algebraic topology. However, in general its computation is fairly hard. 

Recently, D. Cohen, M. Farber and S. Weinberger \cite{C-F-W,C-F-W-2} have introduced a new approach to the theory of motion planning algorithms. In this approach the algorithms involve external conditions which are viewed as parameters taking part of the input. They consider a Hurewicz fibration $p:E\rightarrow B$ which is assumed to be a $1$-equivalence (i.e., each fibre $p^{-1}(b)$ is nonempty and $0$-connected). Here $B$ plays the role of parametrizing the external conditions for the system and for any value $b\in B$ the fibre $p^{-1}(b)$ is the space of achievable configurations of the system under the external condition $b.$ Namely, they define the parametrized topological complexity of $p:E\rightarrow B$ as the sectional category of the associated fibration $\Pi :E^I_B\rightarrow E\times _BE,$ $\gamma \mapsto (\gamma (0),\gamma (1)):$
$$\mbox{TC}[p:E\rightarrow B]:=\mbox{secat}(\Pi )$$
Here, $E^I_B$ denotes the space of paths $\gamma \in E^I$ such that $p\circ \gamma $ is constant, i.e., $\gamma $ lies in a single fibre of $p$. Also, $E\times _BE$ denotes the space of pairs of configurations $(e,e')\in E\times E$ lying in the same fibre, that is, $p(e)=p(e').$ Parametrized topological complexity has proved to be a more sensitive invariant than ordinary topological complexity. In this sense D. Cohen, M. Farber and S. Weinberger have shown that parametrized topological complexity of the problem of collision-free motion of $n$ robots in 3-dimensional space in the presence of $m$ obstacles with unknown a priori position equals $2n+m-1$, in contrast to its standard topological complexity, which is known \cite{F-G-Y} to be just $2n$.

In the work of D. Cohen, M. Farber and S. Weinberger it is assumed that the map considered, $p:E\rightarrow B$, is always a Hurewicz fibration (even more restrictively, a locally trivial fibration) and consequently the map $\Pi $ is also a fibration. However, there are many situations where this condition might not hold. For instance, when we take into account the size of the robots and obstacles in the collision-free motion problem of the above mentioned paragraph. The main goal in this paper is to extend the notion of parametrized topological complexity to maps $E\rightarrow B$, not necessarily fibrations. We are not interested in computing examples, but rather in establishing the theoretical foundations of this extension. The setting in which we will be working throughout this paper is fibrewise homotopy theory. In such a setting the map $\Pi :E^I_B\rightarrow E\times _BE$  is always a fibrewise fibration for any map $p:E\rightarrow B.$ Then we can define fibrewise (or parametrized) topological complexity of the map $p:E\rightarrow B$ as $\mbox{TC}_B(E):=\mbox{secat}_B(\Pi)$; here $\mbox{secat}_B(-)$ stands for fibrewise sectional category, notion that was introduced and studied in \cite{GC}. As we will check, when $p:E\rightarrow B$ is a fibration, the equality $\mbox{secat}_B(\Pi)=\mbox{secat}(\Pi )$ holds true and therefore we have a truly extension of Cohen-Farber-Weinberger's parametrized topological complexity. 

This paper can be also thought as a follow-up to the work established in \cite{GC}, where fibrewise sectional category was introduced. Firstly, in Section 1 we recall some preliminary notions and results about fibrewise homotopy theory. In Section 2, after recalling the notion of fibrewise sectional category we introduce our approach of parametrized topological complexity and establish interesting properties and results. Among them we can mention that it is invariant up to fibrewise homotopy equivalence. Section 3 is fully devoted to introduce a fibrewise pointed version of parametrized topological complexity and to give sufficient conditions so that both invariants agree. In the course of this section the notion of fibrewise locally equiconnected space appears. Such a notion will be crucial for our purposes so we have included a final section, an appendix, where some natural examples and properties of this class of fibrewise spaces are given.

\section{Preliminaries: Fibrewise homotopy theory}

We begin by recalling some notation and results about fibrewise 
homotopy theory. For more details we refer the reader to \cite{C-J} or \cite{GC}.

Let $B$ be a fixed topological space. A fibrewise space over $B$ consists of a pair
$(X,p_X)$ where $X$ is a topological space and $p_X:X\rightarrow B$ is a map. 
The map $p_X$ is usually called the projection of the fibrewise space. When there is no
risk of ambiguity the pair $(X,p_X)$ will be denoted as $X$ and just called fibrewise space.
Given $X$ and $Y$ fibrewise spaces, a fibrewise map (over $B$)
from $X$ to $Y$ consists of an ordinary map $f:X\rightarrow Y$ satisfying
$p_Y\circ f=p_X$
$$\xymatrix{
{X} \ar[rr]^f \ar[dr]_{p_X} & & {Y} \ar[dl]^{p_Y} \\
 & {B} & }$$
We shall denote as $\mathbf{Top}_B$ the resulting category of fibrewise spaces and fibrewise maps
over $B$.  In this category, $B$ with the identity is the final object. The
initial object is $\emptyset $ (with the unique projection map to $B$). In this category, if $X$ and $Y$ 
are fibrewise spaces, then the fibrewise
product of $X$ and $Y$ is $$X\times _BY=\{(x,y)\in
X\times Y:p_X(x)=p_Y(y)\}$$ the pullback of $p_X$ and $p_Y.$ This is indeed the categorical product in 
$\mathbf{Top}_B$ of the objects $X$ and $Y$.
 
We denote by $I$ the closed unit interval $[0,1]$ with the topology induced by the usual one in $\mathbb{R}.$
The \emph{fibrewise cylinder} of a fibrewise space $X$ is just the
product space $X\times I$ together with the composite ${X\times
I}\stackrel{pr}{\longrightarrow }X\stackrel{p_X}{\longrightarrow
}B$ as the projection. We will denote by $I_B(X)$
the fibrewise cylinder of $X$. The notion of \emph{fibrewise
homotopy} $\simeq _B$ between fibrewise maps comes straightforwardly as
well as the notion of \emph{fibrewise homotopy equivalence}. 

If $X$ is a fibrewise space consider the pullback in
the category $\mathbf{Top}$ of topological spaces and maps: 
$$\xymatrix{
{P_B(X)} \ar[r] \ar[d] & {X^I} \ar[d]^{p_X^I} \\
{B} \ar[r]_c & {B^I} }$$ 
Here $X^I$ (and $B^I$) denotes the free path-space provided 
with the cocompact topology and $p_X^I$ is the obvious map induced 
by precomposing with $p_X$. Besides $c:B\rightarrow B^I$ is the map that carries any $b\in B$ to the
constant path $c_b$ in $B^I$. Thus, $P_B(X)$ has the expresion
$$P_B(X)=B\times _{B^I}X^I=\{(b,\alpha )\in B\times X^I:c_b=p_X\circ \alpha \}$$
\noindent with projection $P_B(X)\rightarrow B$, $(b,\alpha )\mapsto b,$ the base change of $p_X^I$ in this pullback. 
$P_B(X)$ is called the \emph{fibrewise cocylinder} of $X$ (or
\emph{fibrewise free path space} of $X$). 

\begin{remark}\label{prim}
Observe that $P_B(X)$ can be also described as the space of all paths $\alpha :I\rightarrow X$ such 
that the path $p_X\circ \alpha $ is constant, i.e., paths lying in single fibre of $X.$ 
This description is given in \cite{C-F-W}, where the notation used is $X_B^I$ for the fibrewise cocylinder.
Also note that $P_B(X)$ is fibrewise homotopy equivalent to $X$. Indeed, the fibrewise map
$$\gamma _X:X\rightarrow P_B(X),\hspace{8pt}x\mapsto (p_X(x),c_x)$$
\noindent is a fibrewise homotopy equivalence with a homotopy inverse $\gamma '_X:P_B(X)\rightarrow X$ defined as 
$\gamma '_X(b,\alpha ):=\alpha (0)$.
\end{remark}

The fibrewise cylinder and fibrewise cocylinder constructions give rise to functors
$$I_B,P_B:\mathbf{Top}_B\rightarrow \mathbf{Top}_B$$
Associated to the functor $I_B$ there are straightforwardly defined natural
transformations $i_0,i_1:X\rightarrow I_B(X)$ and $\rho
:I_B(X)\rightarrow X$. Similarly, associated
to $P_B$ there are natural transformations
$d_0,d_1:P_B(X)\rightarrow X$ and $c:X\rightarrow P_B(X)$. Moreover, $(I_B,P_B)$ is an
\emph{adjoint pair} in the sense of Baues (see \cite[p.29]{B}).
A \emph{fibrewise fibration} is a fibrewise map
$p:E\rightarrow Y$ satisfying the Homotopy Lifting
Property with respect to any fibrewise space, i.e., 
given any commutative diagram of solid arrows in $\mathbf{Top}_B$
$$\xymatrix{
{Z} \ar[r]^f \ar[d]_{i_0} & {E} \ar[d]^p \\
{I_B(Z)} \ar[r]_H \ar@{.>}[ur] & {Y} }$$
\noindent the dotted arrow exists in $\mathbf{Top}_B$ making commutative all the diagram. 
As the functor $I_B$ is left adjoint to the functor $P_B$ it is plain to check that, actually,
fibrewise fibrations are precisely the internal fibrations in $\mathbf{Top}_B$ with respect to $P_B.$
Therefore $\mathbf{Top}_B$ together with $P_B$ the fibrewise cocylinder is a $P$-category in the 
sense of Baues \cite[p.31, Prop (4.6)]{B}. 

If $p:E\rightarrow Y$ is any fibrewise map such that it is an ordinary
Hurewicz fibration, then $p$ is a fibrewise fibration. In
general, the converse is not true. For instance, if $X$ is a
fibrewise space, then $p_X:X\rightarrow B$ is always a fibrewise
fibration, but $p_X$ need not be a Hurewicz fibration.
 
\begin{remark}
From the general axiomatic theory of a $P$-category, if $X$ is any fibrewise space, then the fibrewise map 
$\Pi =(d_0,d_1)$, defined as
$$\Pi :P_B(X)\rightarrow X\times _B X,\hspace{6pt}(b,\alpha )\mapsto (\alpha (0),\alpha (1))$$
\noindent is always a fibrewise fibration, which is not necessarily a Hurewicz fibration. 
Nevertheless, we point out that when the projection $p_X:X\rightarrow B$ is a Hurewicz fibration, then
one can check that $\Pi :P_B(X)\rightarrow X\times _B X$ is also a Hurewicz fibration.
\end{remark}

A fibrewise map $j:A\rightarrow X$ over $B$ is said to be a \emph{fibrewise
cofibration} if it satisfies the Homotopy Extension Property, that is, for 
any fibrewise map $f:X\rightarrow
Y$ and any fibrewise homotopy $H:I_B(A)\rightarrow Y$ such that
$H\circ i_0=f\circ j,$ there exists a fibrewise homotopy
$\widetilde{H}:I_B(X)\rightarrow Y$ such that $\widetilde{H}\circ i_0=f$
and $\widetilde{H}\circ I_B(j)=H$
$$\xymatrix{
{A} \ar[d]_j \ar[r]^{i_0} & {I_B(A)} \ar[d]^H
\ar@/^2pc/[ddr]^{I_B(j)} & \\ {X} \ar[r]^f \ar@/_2pc/[drr]_{i_0} &
{Y} &
\\ & & {I_B(X)} \ar@{.>}_{\widetilde{H}}[ul]}$$

As known, fibrewise cofibrations are cofibrations in the usual
sense. Therefore we can consider, without loss of generality, that the fibrewise
cofibrations are inclusions $A\hookrightarrow X.$
The pair $(X,A)$ is then called \emph{fibrewise cofibred pair}. Similarly, fibrewise cofibrations are precisely 
the internal cofibrations in $\mathbf{Top}_B$ with respect to $I_B,$
so $\mathbf{Top}_B$ together with $I_B$ the fibrewise cylinder is an $I$-category in the 
sense of Baues \cite[p.31, Prop (4.6)]{B}.  

In actual fact, if $fib_B,$ $\overline{cof}_B$ and $he_B$ denote the classes of
fibrewise fibrations, closed fibrewise cofibrations (equivalently,
closed fibrewise cofibred pairs) and fibrewise homotopy
equivalences, respectively, then the category $\mathbf{Top}_B$ together with the classes of
$\overline{cof}_B,$ $fib_B$ and $he_B$ has an IP-category
structure in the sense of Baues \cite{B}. More is true,  
the category $\mathbf{Top}_B$ with 
the classes $\overline{cof}_B,$ $fib_B$ and $he_B$ has 
a proper closed model category structure in the sense of Quillen \cite{GC,M-S}.

\section{Parametrized topological complexity}
In this section we introduce the notion of parametrized (or fibrewise) topological complexity of a fibrewise space over $B$. Then we see that it fulfills interesting properties. We previously need to introduce fibrewise sectional category, given in \cite{GC}.

\subsection{Fibrewise sectional category}

We begin this section by recalling the notion of fibrewise sectional category. 
For more details we refer the reader to \cite{GC}.

\begin{definition}\cite{GC} Given $f:E\rightarrow X$ a fibrewise map over $B$, a subset $U\subseteq X$ 
is said to be \emph{fibrewise sectional} if there exists a fibrewise map $s:U\rightarrow E$ 
such that $f\circ s\simeq _B inc,$ where $inc:U\hookrightarrow X$ denotes the inclusion map. 
In other words, the following diagram in $\mathbf{Top}_B$ is commutative up to fibrewise homotopy:
$$\xymatrix{ {U} \ar@{^(->}[rr]^{inc} \ar[dr]_s & & {X} \\
 & {E} \ar[ur]_f & }$$
The \emph{fibrewise sectional category} of $f$, $\mbox{secat}_B(f),$ is the minimal number $k$ 
such that $X$ admits a cover $\{U_i\}_{i=0}^k$ by $k+1$ fibrewise sectional open subsets. If there is no 
such $k$, then we set $\mbox{secat}_B(f)=\infty .$
\end{definition}

\begin{remark}
Observe that, when $f:E\rightarrow X$ is a fibrewise fibration, then we can replace commutativity up to fibrewise
homotopy with strict commutativity in the definition of fibrewise sectional subset.
\end{remark}

A particular case of fibrewise sectional category is that of \emph{fibrewise unpointed L.S. category} in the sense 
of Iwase-Sakai \cite{I-S,I-S-2}. Recall that a \emph{fibrewise pointed space over } (or ex-space) $B$ is just a 
fibrewise space $X$ together with a fibrewise map $s_X:B\rightarrow X$; this explicitly means that $p_X\circ s_X=1_B$). 
With our notion, the fibrewise unpointed L.-S. category of $X$ is just 
$$\mbox{cat}^*_B(X):=\mbox{secat}_B(s_X)$$

The relationship between fibrewise sectional category and fibrewise unpointed L.-S. category is given by the following property.
Consider $f:E\rightarrow X$ a fibrewise pointed map; that is, $E$ and $X$ are pointed and $f$ is a fibrewise map satisfying
$f\circ s_X=s_Y.$ Then we have that
$$\mbox{secat}_B(f)\leq \mbox{cat}^*_B(X)$$
Moreover, if $E$ is fibrewise contractible (i.e., a shrinkable space), then the equality $\mbox{secat}_B(f)=\mbox{cat}^*_B(X)$ holds.

Another interesting property is that $\mbox{secat}_B(-)$ is invariant up to fibrewise homotopy.

\begin{proposition}\label{inv-secfib}
Consider the following diagram in $\mathbf{Top}_B$, which is commutative up to fibrewise homotopy:
$$\xymatrix{
{E} \ar[d]_f \ar[rr]^{\alpha }_{\simeq _B} & & {E'} \ar[d]^{f'} \\
{X} \ar[rr]_{\beta }^{\simeq _B} & & {X'}
}$$ \noindent and $\alpha ,\beta $ being fibrewise homotopy equivalences. Then $\mbox{secat}_B(f)=\mbox{secat}_B(f').$
\end{proposition}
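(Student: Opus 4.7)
The plan is the usual symmetry-and-substitution argument: by symmetry of the hypothesis, it suffices to prove $\mbox{secat}_B(f')\le \mbox{secat}_B(f)$, and the key move is to transport an open cover of $X$ witnessing $\mbox{secat}_B(f)$ to an open cover of $X'$ witnessing the corresponding bound for $f'$, using fibrewise homotopy inverses of $\alpha$ and $\beta$.

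First, I would fix fibrewise homotopy inverses $\alpha':E'\to E$ and $\beta':X'\to X$ so that $\alpha'\circ\alpha\simeq_B 1_E$, $\alpha\circ\alpha'\simeq_B 1_{E'}$, $\beta'\circ\beta\simeq_B 1_X$, $\beta\circ\beta'\simeq_B 1_{X'}$, together with a chosen fibrewise homotopy $f'\circ\alpha\simeq_B\beta\circ f$. Assume $\mbox{secat}_B(f)=k<\infty$ (the infinite case being trivial) and pick a fibrewise sectional open cover $\{U_i\}_{i=0}^k$ of $X$ with fibrewise sections $s_i:U_i\to E$ satisfying $f\circ s_i\simeq_B inc_i$, where $inc_i:U_i\hookrightarrow X$.

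Next, define $V_i:=(\beta')^{-1}(U_i)\subseteq X'$, which is open since $\beta'$ is continuous, and $\{V_i\}_{i=0}^k$ covers $X'$ because $\{U_i\}$ covers $X$. Define the candidate fibrewise sections
\[
t_i:=\alpha\circ s_i\circ \beta'|_{V_i}:V_i\longrightarrow E'.
\]
Then I would verify $f'\circ t_i\simeq_B inc'_i$ (where $inc'_i:V_i\hookrightarrow X'$) by stringing together three fibrewise homotopies, each restricted to $V_i$: using $f'\circ\alpha\simeq_B\beta\circ f$ gives $f'\circ t_i\simeq_B\beta\circ f\circ s_i\circ\beta'|_{V_i}$; using $f\circ s_i\simeq_B inc_i$ and the fact that $\beta'$ maps $V_i$ into $U_i$ gives $\beta\circ f\circ s_i\circ\beta'|_{V_i}\simeq_B\beta\circ\beta'|_{V_i}$; and finally restricting a fibrewise homotopy $\beta\circ\beta'\simeq_B 1_{X'}$ to $V_i$ gives $\beta\circ\beta'|_{V_i}\simeq_B inc'_i$. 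This shows each $V_i$ is fibrewise sectional for $f'$, so $\mbox{secat}_B(f')\le k$.

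There is no real obstacle; the only small point that needs care is that precomposing a fibrewise homotopy with a fibrewise map yields a fibrewise homotopy (immediate from the definition of $I_B$), and that the concatenation of fibrewise homotopies is again a fibrewise homotopy, which is standard. Reversing the roles of $(f,X,E)$ and $(f',X',E')$ via $\alpha'$, $\beta'$ and an induced fibrewise homotopy $f\circ\alpha'\simeq_B\beta'\circ f'$ (obtained by conjugating the given homotopy with the chosen inverses) yields the reverse inequality, hence the equality.
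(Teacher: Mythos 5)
Your proof is correct and is essentially the paper's own argument: transport a fibrewise sectional cover of $X$ to $X'$ by pulling open sets back along a homotopy inverse of $\beta$ and composing the local sections with $\alpha$, verify the required fibrewise homotopy by concatenation, and conclude by symmetry. The paper merely organizes the same computation by first handling the special cases $\beta=1_X$ and $\alpha=1_E$ and then chaining them via $\mbox{secat}_B(f)=\mbox{secat}_B(\beta\circ f)=\mbox{secat}_B(f'\circ\alpha)=\mbox{secat}_B(f')$, whereas you carry out the composite transport in a single step.
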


\begin{proof}
Suppose first that $X=X'$ and $\beta =1_X$. 
In this case, if $U\subseteq X$ is an open subset with $s:U\rightarrow E$ a fibrewise
homotopy section of $f$ then $\alpha \circ s:U\rightarrow E'$ is a fibrewise homotopy section of $f'.$ 
This proves that $\mbox{secat}_B(f')\leq \mbox{secat}_B(f).$ Similarly, considering the homotopy inverse of $\alpha $ we obtain
the reverse inequality and therefore $\mbox{secat}_B(f)\leq \mbox{secat}_B(f').$ 

Now suppose that $E=E'$ and $\alpha =1_E.$ If $U\subseteq E'$ is an open subset with 
$s:U\rightarrow E$ a homotopy section of $f'$, then we define $V:=\beta ^{-1}(U)$ and $s'$ the composite 
$V\stackrel{\beta |_V}{\longrightarrow }U\stackrel{s}{\longrightarrow }E.$ As
$\beta \circ f\circ s'\simeq _B inc_U \circ (\beta |_V)=\beta \circ inc_V$ we have that $f\circ s'\simeq _B inc_V.$
This proves $\mbox{secat}_B(f)\leq \mbox{secat}_B(f').$ A similar reasoning with the homotopy inverse of $\beta $ 
gives the equality $\mbox{secat}_B(f)=\mbox{secat}_B(f').$ 

Now consider the general case. Using the particular cases above we obtain
$\mbox{secat}_B(f)=\mbox{secat}_B(\beta \circ f)=\mbox{secat}_B(f'\circ \alpha )=\mbox{secat}_B(f').$
\end{proof}

An interesting property of fibrewise sectional category is given by its relationship with the ordinary sectional category. 
From now on, by a \emph{fibrant} fibrewise space over $B$ we will mean a fibrewise space $X$ in which its projection 
$p_X:X\rightarrow B$ is an ordinary Hurewicz fibration.  

\begin{proposition}\cite[Th.2.10]{GC} \label{fib-nfib}
Let $f:E\rightarrow X$ be a fibrewise map between fibrant spaces over $B.$ Then
$\mbox{secat}_B(f)=\mbox{secat}(f).$
\end{proposition}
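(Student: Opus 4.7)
The plan is to prove the equality in two halves: the easy inequality $\mbox{secat}(f)\leq \mbox{secat}_B(f)$ follows at once from the definitions, because a fibrewise section is in particular an ordinary section and a fibrewise homotopy is in particular an ordinary one. For the reverse inequality, my approach is to replace $f$, via a fibrewise mapping path space construction, by a map $q$ that is simultaneously a Hurewicz fibration and a fibrewise fibration, and then to observe that for such a $q$ both sectional categories are computed by the same set of local data.

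Concretely, I would set $\bar E_f := \{(e,\alpha)\in E\times P_B(X) : \alpha(0)=f(e)\}$ and introduce $j:E\to\bar E_f$, $e\mapsto(e, c_{f(e)})$, and $q:\bar E_f\to X$, $(e,\alpha)\mapsto \alpha(1)$, so that $f=q\circ j$. A routine fibrewise deformation (contracting $\alpha$ to $c_{f(e)}$) shows $j$ is a fibrewise homotopy equivalence, hence also an ordinary homotopy equivalence. Applying Proposition~\ref{inv-secfib} gives $\mbox{secat}_B(f)=\mbox{secat}_B(q)$, and the classical homotopy invariance of Schwarz genus (proved by the same open-cover/composition argument) gives $\mbox{secat}(f)=\mbox{secat}(q)$; it therefore suffices to show $\mbox{secat}_B(q)=\mbox{secat}(q)$. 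To establish the double fibration property of $q$, I would factor it as $\bar E_f \to E\times_B X \xrightarrow{\mathrm{pr}_2} X$, where the first arrow is (the isomorphic copy of) the pullback of $\Pi:P_B(X)\to X\times_B X$ along $(f,1_X):E\times_B X\to X\times_B X$, and $\mathrm{pr}_2$ is the pullback of $p_E:E\to B$ along $p_X:X\to B$. Under the fibrancy of $X$, the remark preceding the definition of fibrewise cofibration ensures $\Pi$ is a Hurewicz fibration; the fibrancy of $E$ makes $p_E$ a Hurewicz fibration; and Hurewicz fibrations are stable under pullback and composition, so $q$ is itself a Hurewicz fibration, and hence also a fibrewise fibration.

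With $q$ both types of fibration the conclusion is essentially formal. The remark following the definition of fibrewise sectional category tells me that $\mbox{secat}_B(q)$ may be computed using strict local fibrewise sections of $q$, while the HLP of $q$ allows any up-to-homotopy ordinary section to be lifted to a strict one, so $\mbox{secat}(q)$ is also computed by strict local sections. However, any strict section $s:U\to\bar E_f$ of $q$ automatically satisfies $p_{\bar E_f}\circ s = p_X\circ q\circ s = p_X|_U$ and is therefore a fibrewise map; thus strict sections and strict fibrewise sections are the same data, yielding $\mbox{secat}_B(q)=\mbox{secat}(q)$ and closing the argument. The main obstacle in this plan is identifying $q$ correctly as a composition of pullbacks falling within the scope of the paper's remark on $\Pi$ being a Hurewicz fibration; once that identification is in place, everything else is either a standard stability property or a short verification.
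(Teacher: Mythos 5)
Your argument is correct, and every external ingredient you invoke is actually available in the paper: the remark that $\Pi:P_B(X)\to X\times_BX$ is a Hurewicz fibration when $p_X$ is, the fact that an ordinary Hurewicz fibration which is a fibrewise map is a fibrewise fibration, Proposition~\ref{inv-secfib}, and the two remarks allowing strict (fibrewise) sections when the map is a (fibrewise) fibration. The identification of $q:\bar E_f\to X$ as the pullback of $\Pi$ along $(f,1_X)$ followed by the pullback of $p_E$ along $p_X$ is accurate, so $q$ is indeed a Hurewicz fibration, hence also a fibrewise fibration, and your observation that any strict local section of a fibrewise map is automatically a fibrewise map (since $p_{\bar E_f}=p_X\circ q$) closes the argument cleanly. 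Note, however, that the paper does not prove this statement at all: it is quoted from \cite[Th.~2.10]{GC}, so there is no in-text proof to compare with. Your route --- replace $f$ by its fibrewise mapping path fibration, use both homotopy invariance statements, and then observe that for a map which is simultaneously a Hurewicz and a fibrewise fibration the two sectional categories are computed by the same strict local sections --- is a legitimate self-contained proof. The more direct alternative (presumably closer to the cited source) is to take an ordinary homotopy section $s$ on $U$, use the HLP of $p_E$ against the homotopy $p_X\circ H$ to deform $s$ into a fibrewise map, and then use the HLP of $p_X$ to replace the resulting ordinary homotopy by a vertical (fibrewise) one; that version trades your mapping-path-space bookkeeping for two explicit lifting arguments, while yours has the advantage of reducing everything to formal stability properties of fibrations plus the invariance propositions already stated in the paper.
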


\begin{remark}
In Proposition \ref{fib-nfib} we can also consider \emph{weak fibrant} spaces over $B$, meaning that
the projection is a weak fibration (i.e., a Dold fibration). Recall that a Dold fibration may be defined as a map which is
fibrewise homotopy equivalent to a Hurewicz fibration.
\end{remark}

\subsection{Parametrized topological complexity}
Recall that given any fibrewise space $X$ over $B$, there is defined a fibrewise fibration
$\Pi :P_B(X)\rightarrow X\times _B X$ given as $(b,\alpha )\mapsto (\alpha (0),\alpha (1))$.

\begin{definition}
The parametrized (or fibrewise) topological complexity of a fibrewise space $X\stackrel{p_X}{\rightarrow }B$ is defined as
$$\mbox{TC}_B(X):=\mbox{secat}_B(\Pi _X:P_B(X)\rightarrow X\times _B X)$$
\end{definition}

\begin{remark}
We observe that, when $X$ is fibrant, that is, the projection $p_X:X\rightarrow B$ is an ordinary Hurewicz fibration, then 
both $P_B(X)$ and $X\times _B X$ are also fibrant. Therefore $\mbox{TC}_B(X)=\mbox{secat}(\Pi _X)$ by Proposition \ref{fib-nfib} above. In other words, this notion agrees with the one given
by Cohen, Farber and Weinberger \cite{C-F-W,C-F-W-2}, provided the fibrewise space is fibrant.
\end{remark}

\begin{remark}
Also observe than, taking into account the commutative diagram
$$\xymatrix{
{X} \ar[rr]^{\Delta _X} \ar[dr]_{\gamma _X}^{\simeq _B} & & {X\times _B X} \\
 & {P_B(X)} \ar[ur]_{\Pi _X} &
}$$
\noindent and using Proposition \ref{inv-secfib} we can equivalently define $\mbox{TC}_B(X)=\mbox{secat}_B(\Delta _X).$ Here $\Delta _X$ denotes the diagonal map and $\gamma _X$ the fibrewise homotopy equivalence defined in Remark \ref{prim}.

\end{remark}
\bigskip

Now we see that this is a fibrewise homotopy invariant.

\begin{proposition}
Let $X,X'$ be fibrewise spaces over $B.$ If $X\simeq _B X'$ are fibrewise homotopy equivalent, 
then $\mbox{TC}_B(X)=\mbox{TC}_B(X').$
\end{proposition}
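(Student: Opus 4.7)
The plan is to exploit the alternative characterization $\mbox{TC}_B(X)=\mbox{secat}_B(\Delta_X)$ given in the last remark, together with the homotopy invariance of fibrewise sectional category (Proposition \ref{inv-secfib}). Let $f:X\to X'$ be a fibrewise homotopy equivalence with fibrewise homotopy inverse $g:X'\to X$. The key observation is that $f$ induces a fibrewise map $f\times_B f:X\times_B X\to X'\times_B X'$ on the categorical products, and the square
$$\xymatrix{
{X} \ar[rr]^{f} \ar[d]_{\Delta_X} & & {X'} \ar[d]^{\Delta_{X'}} \\
{X\times_B X} \ar[rr]_{f\times_B f} & & {X'\times_B X'}
}$$
commutes strictly in $\mathbf{Top}_B$.

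The next step is to verify that $f\times_B f$ is a fibrewise homotopy equivalence. The candidate inverse is $g\times_B g$. Functoriality of the fibrewise product gives $(g\times_B g)\circ(f\times_B f)=(g\circ f)\times_B(g\circ f)$, and if $H:I_B(X)\to X$ is a fibrewise homotopy from $g\circ f$ to $1_X$, then $H\times_B H$ (more precisely, the map induced on $I_B(X\times_B X)=(X\times_B X)\times I$ by applying $H$ in each coordinate via the two projections to $I_B(X)$) yields a fibrewise homotopy from $(g\circ f)\times_B(g\circ f)$ to $1_{X\times_B X}$. The reverse composition is handled symmetrically. Hence $f\times_B f\simeq_B^{-1} g\times_B g$.

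Having produced a fibrewise homotopy commutative square (in fact strictly commutative) whose horizontal arrows are fibrewise homotopy equivalences, Proposition \ref{inv-secfib} yields $\mbox{secat}_B(\Delta_X)=\mbox{secat}_B(\Delta_{X'})$, which is exactly $\mbox{TC}_B(X)=\mbox{TC}_B(X')$.

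The only mildly delicate step is verifying that $\times_B$ preserves fibrewise homotopies, i.e.\ that the fibrewise cylinder plays well with the categorical product over $B$; but this is immediate from the definitions, since a fibrewise homotopy on a product is supplied componentwise using the two projections onto the factors. Everything else is formal manipulation of the adjoint pair $(I_B,P_B)$ and an application of the already-established invariance result.
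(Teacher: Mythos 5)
Your proof is correct, and it takes a somewhat different route than the paper's. The paper works directly with the fibrewise fibration $\Pi_X:P_B(X)\rightarrow X\times_B X$: it uses naturality of $\gamma_X$ to conclude that $P_B(f)$ is a fibrewise homotopy equivalence, invokes the gluing lemma of a $P$-category to see that $f\times_B f$ is one as well, and then applies Proposition \ref{inv-secfib} to the square relating $\Pi_X$ and $\Pi_{X'}$. You instead use the diagonal characterization $\mbox{TC}_B(X)=\mbox{secat}_B(\Delta_X)$ (legitimately available from the remark preceding the proposition, itself a consequence of Proposition \ref{inv-secfib}), which makes the top horizontal arrow of your square simply $f$, so the only thing left to check is that $f\times_B f$ is a fibrewise homotopy equivalence; you verify this by hand, noting that a fibrewise homotopy $H$ from $g\circ f$ to $1_X$ induces a componentwise homotopy $(x,y,t)\mapsto (H(x,t),H(y,t))$ which stays inside $X\times_B X$ precisely because $H$ is over $B$ (so $p_X H(x,t)=p_X(x)=p_X(y)=p_X H(y,t)$), and symmetrically for $f\circ g$. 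This replaces the appeal to the gluing lemma and to the cocylinder functor by an elementary explicit construction, which is arguably more self-contained; the paper's argument, on the other hand, is the one that generalizes formally to any $P$-category, since it never uses the pointwise description of $X\times_B X$ or of fibrewise homotopies. Your strict commutativity claim $\Delta_{X'}\circ f=(f\times_B f)\circ\Delta_X$ is also fine, and Proposition \ref{inv-secfib} only requires commutativity up to fibrewise homotopy anyway. (The notation $f\times_B f\simeq_B^{-1} g\times_B g$ is nonstandard, but the meaning — that they are mutually inverse up to fibrewise homotopy — is clear.)
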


\begin{proof}
Consider $f:X\rightarrow X'$ a fibrewise homotopy equivalence. Taking into account that the map $\gamma _X$, defined in Remark
\ref{prim}, is actually a natural transformation, we have that $P_B(f):P_B(X)\rightarrow P_B(X')$ is a fibrewise homotopy equivalence.
Moreover, by the gluing lemma in a $P$-category \cite{B} applied to the following diagram in $\mathbf{Top}_B$
$$\xymatrix{
{X} \ar[rr]^{p_X} \ar[d]^{\simeq _B}_{f} & & {B} \ar[d]_{1_B}^{\simeq _B} & & {X} \ar[d]^f_{\simeq _B} \ar[ll]_{p_X} \\
{X'} \ar[rr]_{p_{X'}} & & {B} & & {X'} \ar[ll]^{p_{X'}}
}$$ \noindent we also have that $f\times _B f:X\times _BX\rightarrow X'\times _BX'$ is a fibrewise homotopy equivalence. The results 
follows by just applying Proposition \ref{inv-secfib} to the commutative diagram
$$\xymatrix{
{P_B(X)} \ar[d]_{\Pi _X} \ar[rr]_{\simeq _B}^{P_B(f)} & & {P_B(X')} \ar[d]^{\Pi _{X'}} \\
{X\times _B X} \ar[rr]^{\simeq _B}_{f\times _B f} & & {X'\times _B X'}
}$$ 
\end{proof}

As a consequence of the result above we have that $\mbox{TC}_B(X)=0$ provided that $X$ is fibrewise contractible (i.e., $X\simeq _B B$). 
Observe that $\mbox{TC}_B(B)=\mbox{secat}_B(1_B)=0.$ For the next result we need $X$ to be fibrewise pointed.

\begin{corollary}
Let $X$ be a fibrewise pointed space over $B$. Then $\mbox{TC}_B(X)=0$ if, and only if, $X$ is fibrewise contractible.
\end{corollary}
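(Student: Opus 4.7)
The plan is to prove the two implications separately. The ``if'' direction has essentially been argued just before the statement: if $X\simeq_B B$, then by the preceding proposition $\mbox{TC}_B(X)=\mbox{TC}_B(B)$, and $\mbox{TC}_B(B)=\mbox{secat}_B(\Pi_B)=0$ since $P_B(B)=B$, $B\times_B B=B$, and $\Pi_B$ is the identity. So the content lies entirely in the converse.

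For the converse, I would use the equivalent characterization noted in the Remark, namely $\mbox{TC}_B(X)=\mbox{secat}_B(\Delta_X)$. If $\mbox{TC}_B(X)=0$, then $\Delta_X:X\rightarrow X\times_B X$ admits a global fibrewise homotopy section, i.e.\ a fibrewise map $s:X\times_B X\rightarrow X$ together with a fibrewise homotopy $H:I_B(X\times_B X)\rightarrow X\times_B X$ from $(s,s)=\Delta_X\circ s$ to the identity $1_{X\times_B X}$.

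Let $p_1,p_2:X\times_B X\rightarrow X$ be the two projections. Post-composing $H$ with $p_1$ and $p_2$ yields fibrewise homotopies $s\simeq_B p_1$ and $s\simeq_B p_2$, and hence
$$p_1\simeq_B p_2\quad\text{as fibrewise maps }X\times_B X\rightarrow X.$$
This is the key consequence: the two projections out of the fibrewise diagonal agree up to fibrewise homotopy.

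Now I bring in the hypothesis that $X$ is fibrewise pointed, with section $s_X:B\rightarrow X$ satisfying $p_X\circ s_X=1_B$. Define $q:X\rightarrow X\times_B X$ by $q(x):=(x,s_X(p_X(x)))$; this is well-defined (both coordinates project to $p_X(x)$) and fibrewise. Observe that $p_1\circ q=1_X$ while $p_2\circ q=s_X\circ p_X$. Composing the homotopy $p_1\simeq_B p_2$ with $q$ produces
$$1_X=p_1\circ q\;\simeq_B\;p_2\circ q=s_X\circ p_X.$$
Combined with $p_X\circ s_X=1_B$, this exhibits $p_X$ and $s_X$ as mutually inverse fibrewise homotopy equivalences, so $X\simeq_B B$, as required. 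The only ``obstacle'' is realizing that $\mbox{secat}_B(\Delta_X)=0$ forces the two projections from the fibrewise product to be fibrewise homotopic; once this is observed, the fibrewise pointing provides exactly the map $q$ needed to convert that homotopy into a fibrewise contraction of $X$ onto $B$.
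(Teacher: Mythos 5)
Your proof is correct, and it takes a recognizably different route from the paper's. The paper works on the path-space side: since $\Pi$ is a fibrewise fibration, $\mbox{TC}_B(X)=0$ yields a strict global section $\sigma$ of $\Pi:P_B(X)\rightarrow X\times_BX$, and the paper then writes down an explicit fibrewise contraction of $P_B(X)$ onto $B$ (the formula $\overline{H}(b,\alpha ,t)(s)=\overline{\sigma }(\alpha (st),s_X(b))(1-t)$, which uses the "motion planner" $\overline{\sigma}$ to drag each fibre-path to the constant path at $s_X(b)$), and finally transfers the conclusion to $X$ via $X\simeq_B P_B(X)$. You instead use the equivalent description $\mbox{TC}_B(X)=\mbox{secat}_B(\Delta_X)$ and argue purely formally: a global fibrewise homotopy section of $\Delta_X$ forces the two projections $p_1,p_2:X\times_BX\rightarrow X$ to be fibrewise homotopic, and precomposing with $q(x)=(x,s_X(p_X(x)))$ turns that homotopy into $1_X\simeq_B s_X\circ p_X$, contracting $X$ onto $B$ directly. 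Your version avoids all explicit path formulas, does not need $\Pi$ to be a fibrewise fibration (a homotopy section suffices, so strictness is never invoked), and isolates the clean statement "$\mbox{secat}_B(\Delta_X)=0$ implies $p_1\simeq_B p_2$", which is reusable; the paper's version, by contrast, produces concrete homotopies and exhibits $P_B(X)\simeq_B B$ explicitly, which is closer in spirit to the motion-planning interpretation. Both proofs use the pointing $s_X$ in exactly the same way and only in the converse direction, so the two arguments are logically on equal footing; yours is simply the more formal, diagonal-based rendering.
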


\begin{proof}
Certainly, by the previous proposition, if $X\simeq _B B$, then $\mbox{TC}_B(X)=\mbox{TC}_B(B)=0.$ Now suppose that $\mbox{TC}_B(X)=0$, that is, we have a strictly commutative
triangle in $\mathbf{Top}_B$
$$\xymatrix{
{X\times _B X} \ar[rr]^{1} \ar[dr]_{\sigma } & & {X\times _BX} \\
 & {P_B(X)} \ar[ur]_{\Pi } &
}$$ 
Then the fibrewise map $\sigma $ is necessarily of the form $\sigma (x,y)=(p_X(x),\overline{\sigma }(x,y))$ where 
$\overline{\sigma }(x,y)$ is a path in $X$ from $x$ to $y$ such that $p_X\circ \overline{\sigma }(x,y)$ is the constant path in 
$p_X(x)(=p_X(y))$.

Now, the projection $p=p_{P_B(X)}:P_B(X)\rightarrow B$ (considered as a fibrewise map) has a homotopy fibrewise homotopy
inverse the fibrewise map $q:B\rightarrow P_B(X),$ defined as $q(b):=(b,c_{s_X(b)})$; here $s_X:B\rightarrow X$ is the section of $p_X.$ Indeed, $p\circ q=1_B$ and $q\circ p\simeq _B1_{P_B(X)}$ through the fibrewise homotopy
$$H:I_B(P_B(X))\rightarrow P_B(X),\hspace{10pt}(b,\alpha ,t)\mapsto (b,\overline{H}(b,\alpha ,t))$$ \noindent where 
$\overline{H}(b,\alpha ,t)(s):=\overline{\sigma }(\alpha (st),s_X(b))(1-t).$ This proves that $P_B(X)\simeq _B B.$ As $X\simeq _B P_B(X)$ (see Remark \ref{prim}) we conclude the result.
\end{proof}

\bigskip
Let $X$ be a fibrewise \emph{pointed} space over $B$. In this case we have that 
$$\Pi :P_B(X)\rightarrow X\times _B X$$
\noindent is indeed a fibrewise pointed map over $B$. The section $s_{P_B(X)}:B\rightarrow P_B(X)$ is given by 
$b\mapsto (b,c_{s_X(b)})$, where $c_{s_X(b)}$ denotes the constant path at the point $s_X(b).$ Moreover, the section
$s_{X\times _B X}:B\rightarrow X\times _B X$ is given by $b\mapsto (s_X(b),s_X(b)).$
Therefore, we have by \cite[Prop.2.3]{GC} that $\mbox{TC}_B(X)\leq \mbox{cat}_B^*(X\times _BX).$
More is true:

\begin{proposition}
Let $X$ be a fibrewise pointed space over $B$. Then
$$\mbox{cat}_B^*(X)\leq \mbox{TC}_B(X)\leq \mbox{cat}_B^*(X\times _BX)$$
\end{proposition}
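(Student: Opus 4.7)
This follows immediately from the discussion just before the statement: $\Pi : P_B(X) \to X\times_B X$ is a fibrewise pointed map between fibrewise pointed spaces (with sections $b\mapsto (b, c_{s_X(b)})$ and $b\mapsto (s_X(b), s_X(b))$), so the general estimate \cite[Prop.~2.3]{GC} for fibrewise pointed maps yields $\mbox{TC}_B(X) = \mbox{secat}_B(\Pi) \leq \mbox{cat}^*_B(X\times_B X)$, with no further work needed.

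\textbf{Left inequality: plan.} Using the identification $\mbox{TC}_B(X) = \mbox{secat}_B(\Delta_X)$ recorded in the remark above, I would produce from any fibrewise sectional open cover $\{U_i\}_{i=0}^{k}$ of $X\times_B X$ for $\Delta_X$ a fibrewise sectional open cover of $X$ for $s_X$ of the same size. The natural map to pull back along is the fibrewise pointed map
$$\iota : X \to X\times_B X, \qquad \iota(x) := (s_X(p_X(x)), x),$$
which satisfies $p_2\circ \iota = 1_X$ for the second projection $p_2$. Setting $V_i := \iota^{-1}(U_i)$ gives open subsets covering $X$. Since the only fibrewise map $V_i \to B$ is $p_X|_{V_i}$, producing a fibrewise section of $s_X$ over $V_i$ reduces to exhibiting a fibrewise homotopy $s_X\circ p_X|_{V_i} \simeq_B inc_{V_i}$ for every $i$; this would give $\mbox{cat}^*_B(X) \leq k = \mbox{TC}_B(X)$.

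\textbf{Key step and main obstacle.} For each $i$ take a fibrewise map $\sigma_i : U_i \to X$ and a fibrewise homotopy $H_i = (H_i^1, H_i^2) : I_B(U_i) \to X\times_B X$ from $\Delta_X\circ \sigma_i$ to $inc_{U_i}$. For $v\in V_i$ both paths $t\mapsto H_i^1(\iota(v), t)$ and $t\mapsto H_i^2(\iota(v), t)$ start at $\sigma_i(\iota(v))$, end at $s_X(p_X(v))$ and $v$ respectively, and lie in the single fibre $p_X^{-1}(p_X(v))$ because $H_i$ is fibrewise. Concatenating the reverse of the first with the second produces, continuously in $v$, a fibrewise path from $s_X(p_X(v))$ to $v$, i.e.\ the desired fibrewise homotopy $s_X\circ p_X|_{V_i}\simeq_B inc_{V_i}$. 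The only place where anything non-formal must be checked is this final concatenation: one must verify that the reverse-and-concatenate of the two coordinate homotopies of $H_i\circ (\iota\times 1_I)$ is jointly continuous in $(v,t)$ and respects the projection to $B$. Both are routine consequences of $H_i$ being a fibrewise homotopy, so what I expect to be the main obstacle is really bookkeeping rather than substance; the conceptual content is entirely in the choice of the auxiliary map $\iota$.
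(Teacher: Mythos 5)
Your argument is correct and takes essentially the same route as the paper: the right inequality is quoted from \cite[Prop.2.3]{GC}, and the left one is obtained by pulling an open cover of $X\times_B X$ back along the map $x\mapsto (s_X(p_X(x)),x)$ (the paper uses $x\mapsto (x,s_X(p_X(x)))$, an immaterial difference) and using the given path data to show each resulting open set is fibrewise categorical. The only deviation is that the paper works with \emph{strict} local sections of the fibrewise fibration $\Pi:P_B(X)\to X\times_B X$, so a section $\sigma(x,y)=(p_X(x),\overline{\sigma}(x,y))$ directly yields the categorical homotopy $H(x,t)=\overline{\sigma}(x,s_X(p_X(x)))(t)$ with no concatenation, whereas your version with homotopy sections of $\Delta_X$ needs the reverse-and-concatenate step, which is indeed routine as you say.
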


\begin{proof}
It only remains to check the first inequality. Take $U\subseteq X\times _BX$ an open subset with $\sigma :U\rightarrow P_B(X)$ a local section
of $\Pi :P_B(X)\rightarrow X\times _B X$. Then, $\sigma $ has necessarily the expression
$$\sigma (x,y)=(p_X(x),\overline{\sigma }(x,y))$$ Here the map $\overline{\sigma }:U\rightarrow X^I$ satisfies $\overline{\sigma }(x,y)(0)=x, \overline{\sigma }(x,y)(1)=y$ and $p_X\circ \overline{\sigma }(x,y)=C_{p_X(x)}$ for all $(x,y)\in U.$ Now, on the open subset of $X$ given as
$V:=\{x\in X:(x,(s_X\circ p_X)(x))\in U\}$ we define the fibrewise homotopy $H:I_B(V)\rightarrow X$ as 
$H(x,t):=\overline{\sigma }(x,s_X(p_X(x)))(t)$. This proves that $V$ is fibrewise categorical, that is, we have a commutativity, up to fibrewise homotopy, of the triangle
$$\xymatrix{
{V} \ar@{^(->}[rr] \ar[dr]_{p_X|_V} & & {X} \\
 & B \ar[ur]_{s_X} & 
}$$ The results follows using this reasoning with open covers.
\end{proof}

\bigskip
Now consider a continuous map $\lambda :B'\rightarrow B.$ Given any fibrewise space $X$ over $B$ the pullback of $p_X$ and $\lambda $
gives rise to a fibrewise space over $B'$:
$$\xymatrix{
{X'} \ar[r] \ar[d]_{p_{X'}} & {X} \ar[d]^{p_X} \\
{B'} \ar[r]_{\lambda } & {B}
}$$ Here $X'=\{(b',x)\in B'\times X: \lambda (b')=p_X(x)\}$ with the obvious projection over $B'.$ Besides, for any fibrewise map over $B$, $f:X\rightarrow Y$ there is an induced
fibrewise map over $B'$, $f':X'\rightarrow Y'$, defined as
$f'(b',x):=(b',f(x))$. 
This construction gives rise to a well-defined functor $$\lambda ^*:\mathbf{Top}_B\rightarrow \mathbf{Top}_{B'}$$ \noindent where $\lambda ^*(X):=X'$ and $\lambda ^*(f):=f'.$

\begin{proposition}
Let $f:E\rightarrow X$ be a fibrewise map over $B$ and $\lambda :B'\rightarrow B$ a map. Then
$\mbox{secat}_{B'}(\lambda ^*f)\leq \mbox{secat}_B(f)$.
\end{proposition}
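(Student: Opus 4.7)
The plan is to exhibit that any open cover of $X$ by fibrewise sectional subsets (for $f$ over $B$) pulls back along $\lambda$ to an open cover of $X' = \lambda^*X$ by fibrewise sectional subsets (for $\lambda^* f$ over $B'$). Write $\tilde\lambda : X' \to X$ for the canonical map $(b',x)\mapsto x$ coming from the pullback square defining $X'$, and analogously $\tilde\lambda_E : E' \to E$. Suppose $\mbox{secat}_B(f) = k$ and pick an open cover $\{U_i\}_{i=0}^k$ of $X$ with fibrewise maps $s_i : U_i \to E$ over $B$ satisfying $f\circ s_i \simeq_B inc_{U_i}$ via fibrewise homotopies $H_i : I_B(U_i) \to X$.

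First I would set $U_i' := \tilde\lambda^{-1}(U_i) \subseteq X'$. These are open, and they cover $X'$ since the $U_i$ cover $X$. For each $i$, define $s_i' : U_i' \to E'$ by $s_i'(b',x) := (b', s_i(x))$. This is well-posed: because $s_i$ is fibrewise over $B$, one has $p_E(s_i(x)) = p_X(x)$, and for $(b',x) \in U_i'$ the pullback condition gives $\lambda(b') = p_X(x)$, so $(b', s_i(x)) \in E'$. The identity $p_{E'} \circ s_i' = p_{U_i'}$ is immediate from the projection formula on $E'$, so $s_i'$ is a fibrewise map over $B'$.

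Next I would pull back the homotopies. Define $H_i' : I_{B'}(U_i') \to X'$ by $H_i'(b',x,t) := (b', H_i(x,t))$. Well-definedness requires $\lambda(b') = p_X(H_i(x,t))$, which holds because $H_i$ is a fibrewise homotopy over $B$, hence $p_X(H_i(x,t)) = p_X(x) = \lambda(b')$. Evaluating at $t=0$ and $t=1$ gives
$$H_i'(b',x,0) = (b', f(s_i(x))) = (\lambda^* f)(s_i'(b',x)), \qquad H_i'(b',x,1) = (b',x) = inc_{U_i'}(b',x),$$
and $p_{X'}\circ H_i' = p_{U_i'} \circ pr$, so $H_i'$ is a fibrewise homotopy over $B'$ witnessing $(\lambda^* f)\circ s_i' \simeq_{B'} inc_{U_i'}$. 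Therefore each $U_i'$ is fibrewise sectional for $\lambda^* f$, proving $\mbox{secat}_{B'}(\lambda^* f) \leq k = \mbox{secat}_B(f)$.

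There is no real obstacle here; the argument is a bookkeeping verification that the pullback functor $\lambda^*$ respects the three ingredients (open subsets, fibrewise sections, fibrewise homotopies) used in the definition of fibrewise sectional category. The only point that requires minor care is checking that the pulled-back section and homotopy actually land in the pullback spaces $E'$ and $X'$; this reduces each time to the compatibility between the original fibrewise data over $B$ and the defining equation $\lambda(b') = p_X(x)$ of the pullback.
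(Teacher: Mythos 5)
Your argument is correct and is essentially the paper's own proof: both pull back each fibrewise sectional open set to its preimage in $\lambda^*(X)$, define the section by $(b',x)\mapsto (b',s_i(x))$ and the homotopy by $(b',x,t)\mapsto (b',H_i(x,t))$, and verify these land in the pullback spaces using $\lambda(b')=p_X(x)$. Your verification of well-definedness is, if anything, spelled out a bit more carefully than in the paper (which even has a small typo, writing the codomain of the pulled-back homotopy as $\lambda^*(E)$ instead of $\lambda^*(X)$).
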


\begin{proof}
Let $U\subseteq X$ be an open subset together with $\sigma :U\rightarrow E$ a fibrewise homotopy section (over $B$) of $f:E\rightarrow X.$ Then there is a fibrewise homotopy over $B$, $H:I_B(U)\rightarrow X$, satisfying $H(x,0)=x$ and $H(x,1)=\sigma (x)$, for all $x\in U.$ We consider on the open subset of $\lambda ^*(X)$
$$V:=\{(b',x)\in B'\times U:\lambda (b')=p_X(x)\}$$
\noindent the fibrewise map over $B'$, $\sigma ':V\rightarrow \lambda ^*(E)$, defined as $\sigma '(b',x):=(b',\sigma (x)).$ Then, it is straightforward to check that the fibrewise homotopy over $B'$, $H':I_{B'}(V)\rightarrow \lambda ^*(E)$, given as $H'(b',x,t):=(b',H(x,t))$ gives a fibrewise homotopy section (over $B'$), i.e., the following diagram, which is commutative up to fibrewise homotopy over $B'$: 
$$\xymatrix{
{V} \ar@{^(->}[rr] \ar[dr]_{\sigma '} & & {\lambda ^*(X)} \\
 & {\lambda ^*(E)} \ar[ur]_{\lambda ^*(f)} & 
}$$
Considering this argument for open covers we obtain the result.
\end{proof}

\begin{corollary}
Let $\lambda :B'\rightarrow B$ be a map and consider $X$ a fibrewise space over $B$. 
Then $\mbox{TC}_{B'}(\lambda ^*(X))\leq \mbox{TC}_{B}(X).$
\end{corollary}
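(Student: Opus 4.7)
The plan is to reduce this corollary to the preceding proposition by showing that the construction $\lambda^*$ commutes, up to canonical homeomorphism, with both the fibrewise product and the fibrewise cocylinder; then the map $\Pi_{\lambda^*(X)}$ defining $\mbox{TC}_{B'}(\lambda^*(X))$ is identified with $\lambda^*(\Pi_X)$, and the previous proposition closes the argument.

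First I would verify the identification
$$\lambda^*(X\times _B X)\cong \lambda^*(X)\times _{B'}\lambda^*(X).$$
This is conceptually just the fact that $\lambda^*$ is a pullback functor, hence preserves limits; concretely, both spaces consist of triples $(b',x,y)$ with $\lambda(b')=p_X(x)=p_X(y)$ (with the obvious topology), and this correspondence is natural in $X$.

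Next I would identify
$$P_{B'}(\lambda^*(X))\cong \lambda^*(P_B(X)).$$
Using the description of the fibrewise cocylinder recalled in Remark \ref{prim}, a point of $P_{B'}(\lambda^*(X))$ is a path $(b'(t),\alpha(t))$ in $\lambda^*(X)$ lying in a single $B'$-fibre, so $b'(t)=b'$ is constant and $\alpha$ is a path in $X$ with $p_X\circ \alpha = c_{\lambda(b')}$; this is precisely a point $(b',(\lambda(b'),\alpha))$ of $\lambda^*(P_B(X))$. Under these two identifications the fibrewise fibration $\Pi_{\lambda^*(X)}:(b',\alpha)\mapsto ((b',\alpha(0)),(b',\alpha(1)))$ becomes exactly the pulled-back map $\lambda^*(\Pi_X)$.

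Having established that $\Pi_{\lambda^*(X)}$ coincides with $\lambda^*(\Pi_X)$, I would apply the preceding proposition to conclude
$$\mbox{TC}_{B'}(\lambda^*(X))=\mbox{secat}_{B'}(\Pi_{\lambda^*(X)})=\mbox{secat}_{B'}(\lambda^*(\Pi_X))\leq \mbox{secat}_B(\Pi_X)=\mbox{TC}_B(X).$$

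The only real point to check carefully is the pair of natural homeomorphisms above (and the compatibility of $\Pi$ with them); both are routine verifications using the universal property of pullbacks, so there is no substantial obstacle, only bookkeeping.
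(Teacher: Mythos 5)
Your proposal is correct and follows essentially the same route as the paper: the paper's proof likewise consists of observing that $\lambda^*(\Pi_X)$ is exactly the fibrewise fibration $\Pi_{\lambda^*(X)}:P_{B'}(\lambda^*(X))\rightarrow \lambda^*(X)\times_{B'}\lambda^*(X)$ and then invoking the preceding proposition on $\mbox{secat}_{B'}(\lambda^*f)\leq \mbox{secat}_B(f)$. You merely spell out the two canonical identifications (pullback commuting with the fibrewise product and with the fibrewise cocylinder), which the paper leaves implicit, and these verifications are correct.
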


\begin{proof}
One has just to take into account that $\lambda ^*(\Pi _X)$ is exactly the fibrewise fibration over $B'$
$$\Pi _{\lambda ^*(X)}:P_{B'}(\lambda ^*(X))\rightarrow \lambda ^*(X)\times _{B'}\lambda ^*(X)$$ 
\noindent and then consider the proposition above.
\end{proof}

\begin{corollary}
Let $X$ be a fibrewise space over $B$. If $B'\subseteq B$ and we take $X'=(p_X)^{-1}(B')$ and $p_{X'}:X'\rightarrow X$ the restriction map, then
$\mbox{TC}_{B'}(X')\leq \mbox{TC}_B(X).$
\end{corollary}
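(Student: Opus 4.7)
The plan is to deduce this immediately from the preceding corollary by taking $\lambda$ to be the inclusion map $B' \hookrightarrow B$. The only thing to verify is that the pullback $\lambda^*(X)$ can be canonically identified, as a fibrewise space over $B'$, with $X' = (p_X)^{-1}(B')$ equipped with the restriction projection.

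First, I would write down the pullback explicitly:
\[
\lambda^*(X) = \{(b',x) \in B'\times X : b' = p_X(x)\},
\]
with projection $(b',x)\mapsto b'$. The map $(b',x)\mapsto x$ sends this set homeomorphically onto $\{x\in X : p_X(x)\in B'\} = X'$, and its inverse is $x\mapsto (p_X(x),x)$. Under this identification the projection of $\lambda^*(X)$ to $B'$ corresponds to the restriction $p_X|_{X'}:X'\to B'$, so $\lambda^*(X)$ and $X'$ agree as objects of $\mathbf{Top}_{B'}$.

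With this identification in hand, the previous corollary yields
\[
\mbox{TC}_{B'}(X') = \mbox{TC}_{B'}(\lambda^*(X)) \leq \mbox{TC}_B(X),
\]
which is the desired inequality. There is no real obstacle here; the only thing requiring a moment's care is confirming that the canonical homeomorphism of total spaces intertwines the two projections to $B'$, which is immediate from the definitions.
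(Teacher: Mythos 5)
Your proposal is correct and follows the same route as the paper: identify $X'=(p_X)^{-1}(B')$ with the pullback $i^*(X)$ along the inclusion $i:B'\hookrightarrow B$ and apply the preceding corollary. Your explicit verification that the canonical homeomorphism intertwines the projections is a welcome (if routine) elaboration of the paper's one-line observation.
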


\begin{proof}
Just observe that $X'$ is the pullback of $p_X$ along the inclusion $i:B'\hookrightarrow B$. In other words, $X'=i^*(X).$
\end{proof}

In particular, if we consider $F_b=(p_X)^{-1}(b)$ for an arbitrary point $b\in B$, then we have that
$$\mbox{TC}(F_b)\leq \mbox{TC}_B(X)$$ \noindent where $\mbox{TC}(-)$ denotes the usual topological complexity.

\section{The fibrewise pointed case}

We shall denote by $\mathbf{Top}(B)$ the category of fibrewise pointed spaces and fibrewise pointed maps over $B$. This is a pointed category where $B$ is the zero object. Fibrewise pointed spaces are also called \emph{ex-spaces} in the literature.

For any fibrewise pointed space $X$ we can consider its  fibrewise pointed cylinder as the topological pushout 
$$\xymatrix{
{B\times I} \ar[r]^{pr} \ar[d]_{s_X\times id} & {B} \ar[d] \\
{X\times I} \ar[r]  & {I_B^B(X)} }$$ \noindent together with the obvious projection induced by the pushout property. Then we can consider  fibrewise pointed homotopy between fibrewise pointed maps, that will be denoted as $\simeq ^B_B.$ The notion of fibrewise pointed homotopy equivalence also comes naturally. We can also consider $P_B(X)$ as a fibrewise pointed space, where the section $B\rightarrow P_B(X)$ is defined as $b\mapsto (b,C_{s_X(b)}).$ We can consider \emph{(closed)  fibrewise pointed cofibration} and 
\emph{fibrewise pointed fibrations} (i.e. the internal (closed) cofibrations and fibrations in $\mathbf{Top}(B)$). These are defined by the
natural Homotopy Extension Property and the Homotopy Lifting
Property in $\mathbf{Top}(B),$ respectively. Then we have that $\mathbf{Top}(B)$ has the structure of an $I$-category and a $P$-category in the sense of Baues (\cite[p.31]{B}). We are particularly interested in a particular class of fibrewise pointed spaces.

A \emph{fibrewise well-pointed} space is a fibrewise pointed space
$X$ in which the section $s_X:B\rightarrow X$ is a closed
fibrewise cofibration. Let $\mathbf{Top}_w(B)$ denote the full
subcategory of $\mathbf{Top}(B)$ consisting of fibrewise well-pointed 
spaces.

\begin{proposition}\cite{GC} \label{importante2}
$\mathbf{Top}_w(B)$ is closed under the pullbacks of fibrewise
pointed maps which are fibrewise fibrations. Similarly,
$\mathbf{Top}_w(B)$ is closed under the pushouts of fibrewise
pointed maps which are closed fibrewise cofibrations. Moreover
if $f:X\rightarrow Y$ is a fibrewise pointed map between
fibrewise well-pointed spaces over $B,$ then,

\begin{enumerate}
\item[(i)] $f$ is a fibrewise pointed fibration if and only if $f$ is a fibrewise
fibration;

\item[(ii)] If $f$ is a closed map, then $f$ is a fibrewise pointed cofibration if and only if $f$ is a fibrewise
cofibration;

\item[(iii)] $f$ is a  fibrewise pointed homotopy equivalence if and only if $f$ is a fibrewise homotopy
equivalence.
\end{enumerate}

\end{proposition}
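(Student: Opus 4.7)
The plan is to first establish the three equivalences (i)--(iii), which characterize fibrewise pointed notions in terms of unpointed ones between well-pointed spaces, and then use them together with an explicit NDR-pair construction to deduce the pullback and pushout closure statements. For (iii), I would follow the classical Dold-type argument in its fibrewise form (as in \cite{C-J}): given a fibrewise homotopy equivalence $f:X\to Y$ between fibrewise well-pointed spaces with an unpointed fibrewise homotopy inverse $g$, the fibrewise HEP for the closed cofibration $s_Y$ lets me replace $g$ by a fibrewise pointed $g'\simeq_B g$, and then straighten both composite homotopies to be stationary on the respective sections by further applications of HEP. The forward directions of (i) and (ii) are immediate by restriction; for the converses, I would convert an unpointed HLP (respectively HEP) problem against $f$ into a pointed one by adjusting the input homotopy to be stationary on $s_B$ using the fibrewise cofibrancy of $s_B$, solve the pointed problem by hypothesis, and then transfer the resulting lift back.

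For the pullback closure, consider a pullback square
\[
\xymatrix{P \ar[r]^{q_2} \ar[d]_{q_1} & X \ar[d]^f \\ Y \ar[r]_g & Z}
\]
with $X,Y,Z$ in $\mathbf{Top}_w(B)$ and $f$ a fibrewise fibration (permitted by (i)). The section is $s_P(b)=(s_Y(b),s_X(b))$. To show $s_P$ is a closed fibrewise cofibration, I would construct NDR-pair data for $(P,s_P(B))$ from the given data on $(X,s_X(B))$, $(Y,s_Y(B))$ and $(Z,s_Z(B))$. The Str\o m function $u_P:=\max\{u_X\circ q_2, u_Y\circ q_1\}:P\to I$ vanishes exactly on $s_P(B)$, and the associated fibrewise deformation $H_P$ on $P$ is built by combining $H_X$ and $H_Y$ and invoking the fibrewise HLP of $f$ to correct the fact that the naive candidate $((y,x),t)\mapsto(H_Y(y,t),H_X(x,t))$ need not land in $P$. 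The pushout closure is dual: for a pushout along a closed fibrewise pointed cofibration (equivalently a closed fibrewise cofibration by (ii)), the section of the pushout is the pushout of the sections, and the standard closed-cofibration pushout argument yields the result.

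The main obstacle will be the construction of the deformation $H_P$ on the pullback, since the combinatorial interaction between the three NDR deformations and the fibrewise HLP of $f$ requires careful parametrization to ensure that $H_P$ simultaneously lands in $P$, is fibrewise over $B$, and deforms a neighbourhood of $s_P(B)$ into $s_P(B)$. Once this is in place, the remaining steps are routine verifications within the standard framework of fibrewise NDR pairs, and the pushout case follows by a direct dualisation.
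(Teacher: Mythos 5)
First, a remark on the comparison: the paper itself gives no proof of this proposition — it is quoted from \cite{GC}, with the underlying machinery in \cite{C-J} — so your plan can only be measured against those sources, which indeed argue, as you intend, via fibrewise Str{\o}m structures and a fibrewise Dold-type theorem. Within that (correct) general strategy there are two genuine gaps. The first is in (i)--(ii): you put the weight on the wrong direction. The implication that actually uses well-pointedness is ``fibrewise fibration $\Rightarrow$ fibrewise pointed fibration'' (and its cofibration analogue): a pointed lifting problem is \emph{not} a restriction of an unpointed one, because the fibrewise pointed cylinder $I_B^B(Z)$ is a quotient of $I_B(Z)$, and the lift supplied by the unpointed HLP need not be stationary on the section, hence neither factors through $I_B^B(Z)$ nor is a fibrewise pointed map; repairing it is precisely where the Str{\o}m structures of $(X,s_X(B))$ and $(Y,s_Y(B))$ must enter. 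Your sketch declares one direction ``immediate by restriction'' and then, for the ``converse'', proposes to ``solve the pointed problem by hypothesis'', i.e.\ both of your arguments take the pointed HLP/HEP as given; moreover the test object of a lifting or extension problem is an arbitrary ex-space, so the ``fibrewise cofibrancy of $s_B$'' (undefined — apparently the section of the test object) is not available: only the well-pointedness of $X$ and $Y$ may be used. The genuinely easy direction (pointed $\Rightarrow$ unpointed, for fibrations) is handled by adjoining a disjoint section to the unpointed test object, not by restriction.

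The second gap is the pullback closure, which is the real content of the statement and which you yourself flag as unresolved. As sketched, the construction fails: after lifting $g\circ H_Y$ through the fibrewise fibration $f$ to repair the naive deformation $(H_Y,H_X)$, the corrected homotopy (a) need not be stationary on $s_P(B)$ — forcing that requires the relative HLP with respect to a closed fibrewise cofibred pair, and the pair you would want, $(P,s_P(B))$, is exactly the one whose cofibredness is being proved — and (b) need not carry the halo $\{u_P<1\}$ into $s_P(B)$ in the $X$-coordinate, since the lift only controls its composite with $f$, not membership in $s_X(B)$. A workable route is to factor $s_P$ as $B\to q_1^{-1}(s_Y(B))\hookrightarrow P$, to prove the second inclusion is a closed fibrewise cofibration by the fibrewise analogue of Str{\o}m's theorem (the preimage of a closed fibrewise cofibration under a fibrewise fibration is a closed fibrewise cofibration), and then to prove separately that $(f^{-1}(s_Z(B)),s_X(B))$ is a closed fibrewise cofibred pair — a step your outline does not address and which again needs the relative HLP together with both Str{\o}m structures. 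By contrast, your pushout case is essentially fine once phrased as: the section of the pushout is the composite of $s_Y$ with the pushout of a closed fibrewise cofibration, hence a composite of closed fibrewise cofibrations. Finally, in (iii) the reduction of the fibrewise Dold theorem to ``a fibrewise pointed self-map freely fibrewise homotopic to the identity is a fibrewise pointed homotopy equivalence'' is the nontrivial core and needs more than repeated HEP straightening; either reproduce or cite the argument from \cite{C-J} rather than asserting it.
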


Now we recall from \cite{GC} the notion of fibrewise pointed sectional category:
Let $f:E\rightarrow X$ be a fibrewise map over $B$ and consider an
open subset $U$ of $X$ containing $s_X(B)$. Then $U$ is said to be
\emph{fibrewise pointed sectional} if there exists 
$s:U\rightarrow E$ in $\mathbf{Top}(B)$ such that the following
triangle commutes up to  fibrewise pointed homotopy
$$\xymatrix{ {U} \ar@{^(->}[rr]^{in} \ar[dr]_s & & {X} \\
 & {E} \ar[ur]_f & }$$ The \emph{fibrewise pointed sectional category} of $f,$
$\mbox{secat}^B_B(f)$, is the minimal number $n$ such
that $X$ admits an open cover $\{U_i\}_{i=0}^n$ constituted by
fibrewise pointed sectional subsets. When such $n$ does not exist
then $\mbox{secat}^B_B(f)=\infty .$ 

\begin{remark}
When $f:E\rightarrow X$ is a fibrewise pointed fibration, then we can replace commutativity up to fibrewise
pointed homotopy with strict commutativity in the definition of fibrewise pointed sectional subset.
\end{remark}

Given $X$ a  fibrewise pointed space over $B$ we already know that  
$\Pi _X:P_B(X)\rightarrow X\times _B X$ is a  fibrewise pointed map. More is true, from the general theory of a $P$-category, we
have that $\Pi _X$ is, actually, a fibrewise pointed fibration.

\begin{definition}
Let $X$ be a fibrewise pointed space over $B$. We define the fibrewise pointed topological complexity of $X$ as
$$\mbox{TC}^B_B(X):=\mbox{secat}^B_B(\Pi _X)$$
\end{definition}

\begin{remark}
Clearly, we have the inequality $\mbox{TC}_B(X)\leq \mbox{TC}^B_B(X),$ for any fibrewise pointed space over $B.$ 
\end{remark}

\begin{example}\label{ejemplo}
Consider the category $\mathbf{Top}^*$ of pointed spaces and pointed maps. This is the category $\mathbf{Top}(B)$ of fibrewise pointed spaces
over $B$ when $B=*.$ There is a notion of \emph{pointed topological complexity} for a given pointed space $X.$ Indeed, if $x_0\in X$ denotes the base point, then $\mbox{TC}^*(X)$ is the least integer $k$ (or infinity) such that $X\times X$ admits an open cover $\{U_i\}_{i=0}^k$ where each $U_i$ contains $(x_0,x_0)$ and there exists a pointed local section $s_i:U_i\rightarrow X^I$ of $\pi :X^I\rightarrow X\times X$ (i.e., $s_i(x_0,x_0)=c_{x_0}$ is the constant path at $x_0$ and $\pi \circ s_i$ equals the inclusion $U_i\hookrightarrow X\times X$). By Proposition \ref{pointed} below, $\mbox{TC}^*(-)$ is invariant up to pointed homotopy equivalence which satisfies analogue properties in the pointed setting as in the classical case. 
\end{example}

Using similar arguments to that used for $\mbox{TC}_B(-)$ it is not difficult to check that $\mbox{TC}^B_B(-)$ satisfies analogue properties to those given for $\mbox{TC}_B(-)$. We leave to the reader the details of the proof of the following proposition.

\begin{proposition}\label{pointed}${}$
\begin{enumerate}
\item  Fibrewise pointed topological complexity may also be described as $$\mbox{TC}_B^B(X):=\mbox{secat}^B_B(\Delta _X)$$
\noindent where $\Delta _X:X\rightarrow X\times _B X$ stands for the diagonal map.

\item If $X\simeq ^B_B Y$ are  fibrewise pointed homotopy equivalent, then $\mbox{TC}_B^B(X)=\mbox{TC}_B^B(Y).$

\item Let $X$ be a fibrewise pointed space over $B$. Then $\mbox{TC}_B^B(X)=0$ if, and only if, $X\simeq ^B_B B.$

\item For $X$ any fibrewise pointed space over $B$ the following inequalities hold true:
$$\mbox{cat}^B_B(X)\leq \mbox{TC}_B^B(X)\leq \mbox{cat}^B_B(X\times _BX)$$ \noindent Here, $\mbox{cat}^B_B(-)$ denotes James-Morris fibrewise L.-S. category \cite{J-M}.

\item Let $\lambda :B'\rightarrow B$ be a continuous map. Then, the pullback construction gives rise to a functor $\lambda ^*:\mathbf{Top}(B')\rightarrow \mathbf{Top}(B)$. Such a functor satisfies
$$\mbox{TC}^{B'}_{B'}(\lambda ^*(X))\leq \mbox{TC}^{B}_{B}(X)$$ \noindent for any fibrewise pointed space $X$ over $B$.
\end{enumerate}
\end{proposition}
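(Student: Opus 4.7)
The plan is to adapt, item by item, the arguments of Section 2 to the fibrewise pointed setting. The enabling observations are two. First, every structural map used there -- $\gamma _X$, $\Pi _X$, $\Delta _X$, $P_B(f)$ and $f\times _B f$ -- is fibrewise pointed whenever its domain is, and preserves fibrewise pointed equivalences. Second, Proposition \ref{inv-secfib} admits a fibrewise pointed analogue (invariance of $\mbox{secat}^B_B$ under fibrewise pointed equivalences), proved verbatim by the same three-step reduction to the cases $\beta =1_X$ and $\alpha =1_E$, now carried out inside the $P$-category $\mathbf{Top}(B)$.

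Granting these, items (1), (2) and (5) are immediate transcriptions of their unpointed counterparts. For (1), the map $\gamma _X$ sends $s_X(b)$ to $(b,c_{s_X(b)})=s_{P_B(X)}(b)$, so it is fibrewise pointed; its inverse $(b,\alpha )\mapsto \alpha (0)$ is also pointed. Since $\Pi _X\circ \gamma _X=\Delta _X$, pointed invariance yields $\mbox{TC}^B_B(X)=\mbox{secat}^B_B(\Delta _X)$. For (2), naturality of $\gamma $ makes $P_B(f)$ a pointed equivalence, and the gluing lemma applied in the $P$-category $\mathbf{Top}(B)$ (available by Proposition \ref{importante2}) makes $f\times _B f$ a pointed equivalence; pointed invariance then applies to the square with vertical sides $\Pi _X$ and $\Pi _Y$. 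For (5), the pullback $\lambda ^*(X)$ acquires the canonical section $b'\mapsto (b',s_X(\lambda (b')))$, and in the construction used in the unpointed case both $\sigma '(b',x)=(b',\sigma (x))$ and $H'(b',x,t)=(b',H(x,t))$ are pointed as soon as $\sigma $ and $H$ are.

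Items (3) and (4) rely on one further observation: the pointedness of a section $\sigma $ of $\Pi _X$ automatically forces the associated path at the basepoint to be constant. Indeed, a strict pointed section has the form $\sigma (x,y)=(p_X(x),\overline{\sigma }(x,y))$, and pointedness imposes $\overline{\sigma }(s_X(b),s_X(b))=c_{s_X(b)}$ for every $b\in B$. This identity is precisely what is needed so that the explicit homotopy $\overline{H}(b,\alpha ,t)(s):=\overline{\sigma }(\alpha (st),s_X(b))(1-t)$ used to prove ``$\mbox{TC}_B(X)=0$ implies fibrewise contractibility'' restricts to the constant homotopy on $s_{P_B(X)}(B)$; hence $P_B(X)\simeq ^B_B B$, and via the pointed equivalence $\gamma _X$ we conclude $X\simeq ^B_B B$, proving (3). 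For (4), the upper bound is the pointed version of \cite[Prop.2.3]{GC}, while the lower bound is obtained by mimicking the unpointed argument: the open set $V:=\{x\in X:(x,s_X(p_X(x)))\in U\}$ contains $s_X(B)$, and $H(x,t):=\overline{\sigma }(x,s_X(p_X(x)))(t)$ satisfies $H(s_X(b),t)=s_X(b)$ by the same identity, so $V$ is fibrewise pointed categorical. The only genuine obstacle in the whole plan is verifying this constancy along the sections; it is automatic in (1), (2) and (5), but in (3) and (4) it requires the non-obvious remark that the pointedness of $\sigma $ supplies exactly the missing identity $\overline{\sigma }(s_X(b),s_X(b))=c_{s_X(b)}$.
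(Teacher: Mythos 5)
Your proposal is correct and follows exactly the route the paper intends: the paper gives no written proof (it explicitly leaves the details to the reader, saying the unpointed arguments for $\mbox{TC}_B(-)$ carry over), and your item-by-item transcription into $\mathbf{Top}(B)$ is precisely that adaptation, including the pointed analogue of Proposition \ref{inv-secfib} and the pointedness of $\gamma_X$, $P_B(f)$ and $f\times_B f$. Your one genuinely added observation --- that pointedness of a section $\sigma$ of $\Pi_X$ forces $\overline{\sigma}(s_X(b),s_X(b))=c_{s_X(b)}$, which is exactly what makes the explicit homotopies in items (3) and (4) pointed --- is the correct key point and closes the only place where the unpointed argument does not transfer verbatim.
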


 No we want to see how close $\mbox{TC}_B(-)$ and $\mbox{TC}_B^B(-)$ are. In order to do this, we will use the following result, whose proof can be found in \cite{GC}.

\begin{theorem}\cite[Th.4.1]{GC}\label{first-condition}
Let $f:E\rightarrow X$ be a fibrewise pointed map in
$\mathbf{Top}_w(B)$ between normal spaces, or a closed fibrewise
cofibration with $X$ normal. Then
$$\mbox{secat}_B(f)\leq \mbox{secat}_B^B(f)\leq \mbox{secat}_B(f)+1$$
\end{theorem}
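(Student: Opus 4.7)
The lower bound $\mbox{secat}_B(f) \le \mbox{secat}_B^B(f)$ is essentially tautological: forgetting the pointed structure of a fibrewise pointed section and of a fibrewise pointed homotopy yields a fibrewise section and a fibrewise homotopy, so every fibrewise pointed sectional cover is, a fortiori, a fibrewise sectional cover.

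For the upper bound, let $\{U_0, \dots, U_n\}$ be a fibrewise sectional open cover realizing $n = \mbox{secat}_B(f)$, with fibrewise homotopy sections $s_i : U_i \to E$. The first step is to manufacture a fibrewise pointed sectional open neighborhood $W$ of $s_X(B)$ out of the cofibration hypothesis. Under (a), the closed fibrewise cofibration $s_X : B \to X$ admits a fibrewise Str\o m structure: a fibrewise map $u : X \to I$ with $u^{-1}(0) = s_X(B)$ and a fibrewise homotopy $H : I_B(X) \to X$ satisfying $H_0 = id_X$, $H_t \circ s_X = s_X$ for all $t$, and $H_1(x) \in s_X(B)$ whenever $u(x) < 1$. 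Under (b), an analogous structure is obtained from the closed fibrewise cofibration $f : E \hookrightarrow X$, giving $u^{-1}(0) = E \supseteq s_X(B)$ and $H_1$ landing in $E$ on $\{u < 1\}$. In both cases set $W := u^{-1}([0,1))$ and take $\sigma_W : W \to E$ to be $s_E \circ p_X|_W$ in case (a) or $H_1|_W$ viewed with target $E$ in case (b). A direct check using $f \circ s_E = s_X$ (respectively, the fact that $H$ fixes $E$ pointwise) shows that $\sigma_W$ is fibrewise pointed and that $f \circ \sigma_W \simeq_B^B inc_W$ via $H|_W$, so $W$ is fibrewise pointed sectional.

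Next I use normality of $X$ to shrink: choose an open $V$ with $s_X(B) \subseteq V \subseteq \overline{V} \subseteq W$. For each $i$, define
\[
U_i^* := (U_i \setminus \overline{V}) \cup V.
\]
Since $V \cap (U_i \setminus \overline{V}) = \emptyset$, the set $U_i^*$ is the disjoint union (in its subspace topology) of two open pieces, and it contains $s_X(B)$ through $V$. Define $\widetilde{s}_i$ piecewise, equal to $s_i$ on $U_i \setminus \overline{V}$ (where the pointed condition is vacuous because this piece misses $s_X(B)$) and to $\sigma_W|_V$ on $V$; continuity and the fibrewise pointed property are automatic from the disjointness, and the original homotopy for $s_i$ together with $H|_V$ combine to yield $f \circ \widetilde{s}_i \simeq_B^B inc_{U_i^*}$. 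The family $\{U_0^*, \dots, U_n^*, W\}$ then consists of $n+2$ fibrewise pointed sectional open subsets whose union contains both $X \setminus \overline{V}$ and $\overline{V} \subseteq W$, hence covers $X$. This gives $\mbox{secat}_B^B(f) \le n + 1$.

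The principal technical delicacy is unifying the two alternative hypotheses, since $W$ is produced from structurally different cofibrations in the two cases and the verification that $\sigma_W$ is a pointed section of $f$ up to pointed homotopy requires each variant of the Str\o m data to be handled separately; once $W$ is in hand, the splitting of $U_i^*$ as a disjoint union of open sets reduces the subsequent gluing to a trivial piecewise definition, which is what makes the argument work without any fibrewise partition-of-unity machinery.
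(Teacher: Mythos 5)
The paper itself gives no proof of this statement---it is imported verbatim from \cite[Th.~4.1]{GC}---so there is no internal argument to compare against; judged on its own, your proof is correct and follows the standard route one would expect: the lower bound by forgetting base points, and the upper bound by using the fibrewise Str{\o}m structure (of $s_X$ in the well-pointed case, of the closed cofibration $f$ in the other case) to produce a fibrewise pointed sectional halo $W=u^{-1}([0,1))$ of the section, shrinking by normality to $V$ with $\overline{V}\subseteq W$, and replacing each $U_i$ by the disjoint open union $(U_i\setminus\overline{V})\cup V$, which gives $n+2$ fibrewise pointed sectional sets and hence $\mbox{secat}_B^B(f)\le \mbox{secat}_B(f)+1$. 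The verifications all check out: $\sigma_W$ is pointed in both cases, $H_1|_W=s_X\circ p_X|_W=f\circ\sigma_W$ in case (a) because $H$ is fibrewise and $p_X\circ s_X=1_B$, and the glued homotopies are pointed since only the $V$-piece meets $s_X(B)$ and there the homotopy is $H$, which fixes the section. The one imprecision is in the closed-cofibration case: $s_X(B)$ need not be closed in $X$ (only $E$ is), so the normality shrinking should be applied to the closed set $E=u^{-1}(0)$ rather than to $s_X(B)$; since $s_X(B)\subseteq E\subseteq W$, the required $V$ still exists and nothing downstream changes. (Incidentally, in case (a) your argument uses only the well-pointedness and normality of $X$, not of $E$, which is harmless---it proves slightly more than stated.)
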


We will consider fibrewise spaces $X$ in which the diagonal map $\Delta :X\rightarrow X\times _B X$ is a closed fibrewise cofibration. The class of such fibrewise spaces will be called \emph{fibrewise locally equiconnected spaces}. 

\begin{definition}
Let $X$ be a fibrewise space over $B$. Then $X$ is said to \emph{fibrewise locally equiconnected} when $\Delta _X:X\rightarrow X\times _B X$ is a closed fibrewise cofibration. 
\end{definition}

Of course, every locally equiconnected space (i.e., the diagonal map is a closed classical cofibration) is a fibrewise locally equiconnected space over $B=*.$ The class of fibrewise locally equiconnected spaces is not as restrictive as it may appear at first glance. We give some natural examples in the final appendix of this paper. Moreover, if $X\in \mathbf{Top}(B)$ is a fibrewise pointed space over $B$ and $X$ is fibrewise locally equiconnected, then necessarily $X\in \mathbf{Top}_w(B)$ is a fibrewise well-pointed space (see Corollary \ref{wp} below).

As a consequence of Theorem \ref{first-condition} we have our first result for the relationship between fibrewise topological complexity and its pointed counterpart.

\begin{corollary}
Let $X\in \mathbf{Top}(B)$ be a fibrewise pointed space over $B$. If $X$ is fibrewise locally equiconnected and $X\times _BX$ is normal, then
$$\mbox{TC}_B(X)\leq \mbox{TC}^B_B(X)\leq \mbox{TC}_B(X)+1$$
\end{corollary}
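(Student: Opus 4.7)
The plan is to reduce the statement to an application of Theorem \ref{first-condition} to the diagonal map $\Delta _X:X\to X\times _BX$. The first inequality $\mbox{TC}_B(X)\leq \mbox{TC}^B_B(X)$ requires no work: every fibrewise pointed sectional open cover of $X\times _BX$ is in particular a fibrewise sectional open cover, and this is precisely the remark already recorded immediately after the definition of $\mbox{TC}^B_B(X)$.

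For the second inequality, I would first rewrite both invariants using the diagonal formulation. By the remark following the definition of $\mbox{TC}_B(X)$ (which uses Proposition \ref{inv-secfib} together with the fibrewise homotopy equivalence $\gamma _X:X\to P_B(X)$), we have $\mbox{TC}_B(X)=\mbox{secat}_B(\Delta _X)$. Similarly, part (1) of Proposition \ref{pointed} gives $\mbox{TC}^B_B(X)=\mbox{secat}^B_B(\Delta _X)$. Hence it suffices to prove that $\mbox{secat}^B_B(\Delta _X)\leq \mbox{secat}_B(\Delta _X)+1$.

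Now the two hypotheses of the corollary translate directly into the hypotheses of Theorem \ref{first-condition}: the fibrewise local equiconnectedness of $X$ is, by definition, the statement that $\Delta _X$ is a closed fibrewise cofibration, and we are assuming that its target $X\times _BX$ is normal. This is precisely the second alternative hypothesis of Theorem \ref{first-condition} applied to the map $f=\Delta _X$, which also qualifies as fibrewise pointed since $\Delta _X\circ s_X=s_{X\times _BX}$. Invoking that theorem therefore yields the chain of inequalities
$$\mbox{secat}_B(\Delta _X)\leq \mbox{secat}^B_B(\Delta _X)\leq \mbox{secat}_B(\Delta _X)+1,$$
which by the rewriting above is exactly the conclusion.

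There is no real obstacle in this argument: once both complexities have been expressed through $\Delta _X$, the result reduces verbatim to Theorem \ref{first-condition}. The only point worth double-checking while writing the proof is that the reformulation via $\Delta _X$ in the pointed setting (Proposition \ref{pointed}(1)) genuinely relies on the fibrewise pointed version of Proposition \ref{inv-secfib}, and that the naturality argument using $\gamma _X$ still goes through in $\mathbf{Top}(B)$ — this is guaranteed by Proposition \ref{importante2}(iii) once we note that, by the remark about $\mbox{cat}^*_B$ and the forthcoming Corollary \ref{wp}, fibrewise local equiconnectedness forces $X$ to be fibrewise well-pointed.
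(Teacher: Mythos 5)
Your proposal is correct and follows essentially the same route as the paper: both rewrite $\mbox{TC}_B(X)=\mbox{secat}_B(\Delta _X)$ and $\mbox{TC}^B_B(X)=\mbox{secat}^B_B(\Delta _X)$ and then apply Theorem \ref{first-condition} to $\Delta _X$, using that fibrewise local equiconnectedness means $\Delta _X$ is a closed fibrewise cofibration with normal target (and that $X$, $X\times _BX$ are consequently fibrewise well-pointed). No gaps to report.
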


\begin{proof}
Just apply Theorem \ref{first-condition} to $\Delta _X:X\rightarrow X\times _BX$ and take into account that $\mbox{TC}_B(X)=\mbox{secat}_B(\Delta _X)$, $\mbox{TC}_B^B(X)=\mbox{secat}_B^B(\Delta _X)$. Observe that, necessarilly $X$ and $X\times _BX$ are fibrewise well-pointed  spaces. 
\end{proof}

\begin{remark}
Observe that the condition on $X\times _BX$ being normal is not that restrictive. For instance, when $B$ is Hausdorff and $X$ is metrizable, then this condition holds.
\end{remark}

For our next result we will use the following theorem.
If $X$ is a fibrewise pointed space over $B,$ then we say that $X$
is \emph{cofibrant} when the section $s_X$ is an ordinary closed cofibration
in $\mathbf{Top}.$ Observe the difference with the notion of being
well-pointed, in which $s_X$ must be a closed fibrewise cofibration. 
As any fibrewise cofibration is, in particular, an ordinary cofibration, 
we obviously have that any fibrewise well-pointed space is cofibrant.

\begin{theorem}\cite[Th.4.4]{GC}
Let $f:E\rightarrow X$ be a fibrewise pointed map between fibrewise pointed
fibrant and cofibrant spaces over $B.$ Suppose, in addition, that
$B$ is a CW-complex and the following conditions are satisfied:
\begin{enumerate}
\item[(i)] $f:E\rightarrow X$ is a $k$-equivalence ($k\geq 0$);

\item[(ii)] $\mbox{dim}(B)<(\mbox{secat}_B(f)+1)(k+1)-1.$
\end{enumerate}
\noindent Then $\mbox{secat}_B(f)=\mbox{secat}_B^B(f).$
\end{theorem}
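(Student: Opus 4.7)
The inequality $\mbox{secat}_B(f)\leq \mbox{secat}_B^B(f)$ is immediate from the definitions, since every fibrewise pointed sectional subset is in particular a fibrewise sectional subset. The substantive direction is $\mbox{secat}_B^B(f)\leq \mbox{secat}_B(f)$, and my plan is to start with an optimal fibrewise sectional cover and upgrade it to a pointed one by a parametrized obstruction argument over the CW-complex $B$.

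Concretely, set $n=\mbox{secat}_B(f)$ and fix an open cover $\{U_0,\ldots,U_n\}$ of $X$ together with fibrewise homotopy sections $\sigma_i:U_i\to E$ of $f$. The task is to produce open neighborhoods $V_i$ of $s_X(B)$ in $X$ and modifications $\widetilde{\sigma}_i:U_i\cup V_i\to E$ which restrict to pointed sections near $s_X(B)$ while remaining fibrewise homotopy sections of $f$. This breaks into two subtasks: first, extend $s_E:B\to E$ to a fibrewise pointed section $\tau$ of $f$ defined on some open neighborhood of $s_X(B)$ (this uses fibrancy of $E$ and $X$ together with cofibrancy of $s_X$); and second, reconcile this local pointed section with each $\sigma_i$ on the overlap $U_i\cap V_i$ by a fibrewise homotopy which is constant on $s_X(B)$, so that the resulting $\widetilde{\sigma}_i$ is well-defined up to fibrewise pointed homotopy.

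Both subtasks are governed by the standard obstruction theory for sections of fibrations over a CW-complex. The relevant obstructions lie in cohomology groups of the form $H^{j+1}(B;\pi_j(F))$, where $F$ denotes the (parametrized) homotopy fibre of $f$. Because $f$ is a $k$-equivalence, $F$ is $k$-connected, so these obstructions vanish whenever $j\leq k$, and the Homotopy Lifting Property, available from fibrancy, lets us carry out the required lifts cell-by-cell on $B$. Performing the adjustment successively for each of the $n+1$ local sections, the accumulated dimensional room needed on $B$ is exactly $\dim B<(n+1)(k+1)-1$, matching hypothesis (ii). Once all $\widetilde{\sigma}_i$ are in place, the enlarged cover $\{U_i\cup V_i\}$ witnesses $\mbox{secat}_B^B(f)\leq n$, which closes the argument.

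The main obstacle is bookkeeping: one must carry out the obstruction-theoretic construction in the parametrized fibrewise pointed setting while simultaneously ensuring that (a) every homotopy projects to the identity on $B$, (b) every homotopy fixes $s_X(B)$ pointwise, and (c) the modifications on different $U_i$ can be made compatible. The interplay between cofibrancy of $X$ (which supplies the Homotopy Extension Property needed to push the pointed section outward from $s_X(B)$) and fibrancy of $E$ and $X$ (which supplies the Homotopy Lifting Property needed to lift homotopies in $X$ to homotopies in $E$) is what makes the two types of modification mutually compatible, while the CW-structure on $B$ turns the $k$-equivalence and dimension hypotheses into the concrete vanishing of obstruction classes needed to complete the induction.
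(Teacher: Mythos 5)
Your first inequality is fine, but the substantive direction has a genuine gap, and it is precisely where the numerology of hypothesis (ii) should come from. Note first that the present paper does not prove this statement at all: it is quoted from \cite[Th.4.4]{GC}, whose proof goes through the fibrewise Whitehead/Ganea machinery, not through a chart-by-chart repair of the cover. In your sketch the obstructions you invoke live in $H^{j+1}(B;\pi_j(F))$ with $F$ the homotopy fibre of $f$; since $f$ is a $k$-equivalence, $F$ is $(k-1)$-connected (not $k$-connected, cf.\ the convention that a $1$-equivalence has $0$-connected fibres), so this argument only gives vanishing in a range governed by $k$ alone, roughly $\mbox{dim}(B)\leq k-1$. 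Nothing in your construction makes the admissible range grow with the number of local sections: you adjust the $n+1$ sections one at a time, and each adjustment meets the same obstruction groups, so the dimensional requirements do not ``accumulate'' or multiply. The assertion that the total room needed is exactly $\mbox{dim}(B)<(n+1)(k+1)-1$ is therefore unsupported, and in fact your route, even with all gluing details repaired, would only prove a weaker statement with a hypothesis independent of $\mbox{secat}_B(f)$ --- whereas the whole point of the theorem is that the allowed dimension of $B$ improves as $\mbox{secat}_B(f)$ grows.

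The factor $(\mbox{secat}_B(f)+1)(k+1)-1$ is the signature of the join construction: in \cite{GC} one characterizes $\mbox{secat}_B(f)\leq n$ (resp.\ $\mbox{secat}_B^B(f)\leq n$) by the existence of a fibrewise (resp.\ fibrewise pointed) homotopy section of the $n$-th fibrewise Ganea/join map associated to $f$, whose fibre is the $(n+1)$-fold join of $F$ and hence $((n+1)(k+1)-2)$-connected. The passage from an unpointed to a pointed section is then a single obstruction/compression problem concentrated over the section $s_X(B)\cong B$, with coefficients in the homotopy of that join fibre; condition (ii), i.e.\ $\mbox{dim}(B)\leq (n+1)(k+1)-2$, is exactly what makes those obstructions vanish. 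Without encoding the whole cover into one section of a join (or fat wedge) construction, you cannot recover this bound. A secondary issue, less decisive but real, is the patching step: merging your pointed germ $\tau$ near $s_X(B)$ with a given $\sigma_i$ into one homotopy section on $U_i\cup V$ requires a vertical homotopy between two sections over a region of $X$ (not of $B$), so the cell-by-cell argument over $B$ does not control it as stated.
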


As an immediate consequence of this theorem we have:

\begin{proposition}\label{fundam}
Let $X$ be a fibrewise pointed space over $B$ which is fibrewise locally equiconnected and fibrant.
Consider $B$ a CW-complex and suppose the following conditions are satisfied:
\begin{enumerate}
\item[(i)] $\Delta _X:X\rightarrow X\times _BX$ is a $k$-equivalence ($k\geq 0$).

\item[(ii)] $\mbox{dim}(B)<(\mbox{TC}_B(X)+1)(k+1)-1.$
\end{enumerate}
Then $\mbox{TC}_B(X)=\mbox{TC}_B^B(X).$
\end{proposition}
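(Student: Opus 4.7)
The plan is to apply the referenced theorem (\cite[Th.4.4]{GC}, stated just above) directly to the diagonal map $\Delta _X:X\rightarrow X\times _BX$. Recall from the Remark preceding and from Proposition~\ref{pointed}(1) the identifications $\mbox{TC}_B(X)=\mbox{secat}_B(\Delta _X)$ and $\mbox{TC}^B_B(X)=\mbox{secat}^B_B(\Delta _X)$; thus the conclusion of that theorem for $f=\Delta _X$ is exactly the desired equality. What remains is to verify the four hypotheses of the theorem: that $\Delta _X$ is a fibrewise pointed map, that both source and target are fibrewise pointed fibrant and cofibrant, and the dimension/connectivity conditions (i) and (ii).

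First I would check that $\Delta _X$ is a fibrewise pointed map, which is immediate: $\Delta _X\circ s_X=(s_X,s_X)=s_{X\times _BX}$. Next, both $X$ and $X\times _BX$ must be fibrant. Fibrancy of $X$ is an assumption; fibrancy of $X\times _BX$ then follows because the pullback of Hurewicz fibrations along a Hurewicz fibration is again a Hurewicz fibration. For cofibrancy, the fibrewise locally equiconnected hypothesis says that $\Delta _X$ is a closed fibrewise cofibration, which by Corollary~\ref{wp} implies that $X$ is fibrewise well-pointed. Since $p_X$ is a fibrewise pointed fibration (by fibrancy), Proposition~\ref{importante2} applied to the pullback square defining $X\times _BX$ shows that $X\times _BX$ is fibrewise well-pointed as well. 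Both objects being fibrewise well-pointed are, in particular, cofibrant in the ordinary sense, as noted in the excerpt.

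Finally, conditions (i) and (ii) of the referenced theorem translate directly: (i) coincides with condition (i) of the Proposition, and (ii) reads $\mbox{dim}(B)<(\mbox{secat}_B(\Delta _X)+1)(k+1)-1$, which under the identification $\mbox{secat}_B(\Delta _X)=\mbox{TC}_B(X)$ is exactly condition (ii). The theorem then yields $\mbox{secat}_B(\Delta _X)=\mbox{secat}^B_B(\Delta _X)$, i.e., $\mbox{TC}_B(X)=\mbox{TC}^B_B(X)$.

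The only nonformal step is propagating fibrewise well-pointedness from $X$ to $X\times _BX$, and this is handled cleanly by the closure of $\mathbf{Top}_w(B)$ under pullbacks along fibrewise pointed fibrations (Proposition~\ref{importante2}); everything else is a direct bookkeeping translation between the general statement in \cite{GC} and the specialization to $\Delta _X$.
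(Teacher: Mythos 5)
Your proof is correct and follows essentially the same route as the paper: identify $\mbox{TC}_B(X)=\mbox{secat}_B(\Delta_X)$ and $\mbox{TC}^B_B(X)=\mbox{secat}^B_B(\Delta_X)$, verify that $X$ and $X\times_BX$ are fibrant and fibrewise well-pointed (hence cofibrant), and apply Theorem 4.4 of \cite{GC} to $f=\Delta_X$. The only difference is that you spell out the propagation of well-pointedness to $X\times_BX$ via Proposition \ref{importante2} and of fibrancy via pullback/composition of fibrations, details which the paper's proof merely asserts.
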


\begin{proof}
Observe that both $X$ and $X\times _BX$ are fibrewise well-pointed and therefore cofibrant. 
Moreover, as $X$ is fibrant, then $X\times _BX$ also is. Then, we have just to apply the theorem above to
$f=\Delta _X.$
\end{proof}

\begin{example}
We consider the case of $\mathbf{Top}^*$ given in Example \ref{ejemplo}. Then, if $X$ is any path-connected, locally equiconnected space (for instance, a connected CW-complex or a connected ANR space), then we have that topological complexity and pointed topological complexity agree on this space:
$$\mbox{TC}(X)=\mbox{TC}^*(X)$$
Indeed, observe that if $X$ is contractible, then it is contractible in the pointed sense since $X$ is well-pointed; therefore, $\mbox{TC}(X)=0=\mbox{TC}^*(X).$ When $X$ is not contractible, conditions (i) and (ii) apply since $\Delta _X:X\rightarrow X\times X$ is a 0-equivalence and $B=*.$

Observe than, in general it is not true that $\mbox{TC}(X)=\mbox{TC}^*(X)$. Consider $X=\bigcup
_{n=0}^{\infty }L_n$, where $L_n=\{(t,\frac{t}{n}):t\in
[0,1]\}$ is the segment in $\mathbb{R}^2$ joining $(0,0)$ with $(1,\frac{1}{n})$ for
$n\geq 1$ and $L_0=[0,1]\times \{0\}$. 
 Clearly $X$ is a
contractible space and therefore $\mbox{TC}(X)=0.$ On the other hand, if we take $x_0=(1,0)$ as a base point for $X$, then $\mbox{TC}^*(X)\neq 0$. Otherwise we would have $s:X\times X\rightarrow X^I$ a continuous section of the
path fibration $\pi :X^I\rightarrow X\times X$ satisfying
$s(x_0,x_0)=c_{x_0}$. The sequence $x_n=(1,\frac{1}{n})$ converges to $x_0$ so, by continuity,  
$s(x_n,x_0)$ must converge to $s(x_0,x_0)=c_{x_0}.$ This is not possible,
as $s(x_n,x_0)$ is a path from $x_n$ to $x_0$
that must pass through the origin $(0,0)$, for all $n\in \mathbb{N}.$ An easier argument is to check that $(X,x_0)$ is not contractible in the pointed sense, and therefore $\mbox{TC}^*(X)$ cannot be $0.$

\end{example}

\medskip
We end our study with a short discussion about when $\Delta _X:X\rightarrow X\times _BX$ is a $k$-equivalence. Consider the following (homotopy) commutative diagram in $\mathbf{Top}$:
$$\xymatrix{
{Y} \ar[d]^{\beta } \ar[r] & {X} \ar[d]^{\alpha } & {Z} \ar[d]^{\gamma } \ar[l] \\
{Y'} \ar[r] & {X'} & {Z'} \ar[l] }$$
Then it is well-known that if $\alpha $ is an $(n+1)$-equivalence and $\beta ,\gamma $ are $n$-equivalences, then the map induced between the corresponding homotopy pullbacks $Y\times ^h_XZ\rightarrow Y'\times ^h_{X'}Z'$ is an $n$-equivalence. Therefore, assuming that $X$ is fibrant (i.e., $p_X:X\rightarrow B$ is an ordinary fibration) we deduce from the commutative diagram 
$$\xymatrix{
{X} \ar[d]^{1_X} \ar[r]^{1_X} & {X} \ar[d]^{p_X} & {X} \ar[d]^{1_X} \ar[l]_{1_X} \\
{X} \ar[r]_{p_X} & {B} & {X} \ar[l]^{p_X} }$$
\noindent that $\Delta _X:X\rightarrow X\times _BX$ is a $k$-equivalence provided that $p_X:X\rightarrow B$ is a $(k+1)$-equivalence.

As a consequence, we may restate Proposition \ref{fundam} above replacing condition (i) with $p_X:X\rightarrow B$ being a $(k+1)$-equivalence. In particular we have the following result:

\begin{corollary}
Let $X$ be a fibrewise pointed space over a connected CW-complex $B$. Suppose that $X$ is fibrewise locally equiconnected and $p_X:X\rightarrow B$ is a fibration with nonempty path-connected fibre. If $\mbox{dim}(B)<\mbox{TC}_B(X)$, then $\mbox{TC}_B(X)=\mbox{TC}_B^B(X).$
 \end{corollary}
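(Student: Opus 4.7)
The plan is to derive the result directly from Proposition~\ref{fundam}, taking advantage of the improvement noted just above the corollary whereby condition (i) (that $\Delta_X$ is a $k$-equivalence) can be replaced by the condition that $p_X\colon X\to B$ is a $(k+1)$-equivalence. I would apply this with $k=0$, so the task reduces to checking that $p_X$ is a $1$-equivalence and that the dimension hypothesis (ii) specializes correctly.

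The first (and only substantive) step is verifying that $p_X$ is a $1$-equivalence. Since $p_X$ is a Hurewicz fibration with nonempty path-connected fibre $F$ and $B$ is a connected CW-complex, the long exact sequence in homotopy supplies the tail
$$\pi_1(F)\longrightarrow \pi_1(X)\longrightarrow \pi_1(B)\longrightarrow \pi_0(F)\longrightarrow \pi_0(X)\longrightarrow \pi_0(B).$$
The vanishing $\pi_0(F)=\ast$ makes $\pi_1(X)\to\pi_1(B)$ surjective and $\pi_0(X)\to\pi_0(B)$ injective; since $B$ is path-connected and $F$ is nonempty, the map on $\pi_0$ is also surjective, hence bijective. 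Thus $p_X$ is a $1$-equivalence, i.e.\ a $(k+1)$-equivalence with $k=0$.

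Finally I would verify the dimension inequality of Proposition~\ref{fundam}. With $k=0$ it becomes $\mbox{dim}(B)<(\mbox{TC}_B(X)+1)(0+1)-1=\mbox{TC}_B(X)$, which is precisely the hypothesis of the corollary. The remaining ingredients required by Proposition~\ref{fundam} are immediate: $X$ is fibrewise pointed and fibrewise locally equiconnected by assumption (the latter guaranteeing $X\in\mathbf{Top}_w(B)$ via Corollary~\ref{wp}), fibrant because $p_X$ is a Hurewicz fibration, and $B$ is a CW-complex. Applying the restated proposition yields the equality $\mbox{TC}_B(X)=\mbox{TC}_B^B(X)$. There is no real obstacle in the argument; the only step needing any computation is the extraction of the $1$-equivalence property from the fibration hypothesis, and that is a standard application of the homotopy long exact sequence.
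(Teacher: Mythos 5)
Your proposal is correct and matches the paper's intended argument: the corollary is exactly the restated Proposition~\ref{fundam} with $k=0$ (condition (i) replaced by $p_X$ being a $1$-equivalence, which your long-exact-sequence check justifies), and the dimension hypothesis specializes to $\dim(B)<\mbox{TC}_B(X)$ as you note. The remaining hypotheses (cofibrancy via Corollary~\ref{wp}, fibrancy from the fibration assumption) are handled just as in the paper.
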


\section{Appendix: Fibrewise locally equiconnected spaces.}

The aim of this auxiliary section is to give examples and some interesting results on fibrewise locally equiconnected spaces, among them their relationship with fibrewise uniformly locally contractible spaces. The main reference in this section is \cite{C-J} and we will consider similar cofibration-like techniques to those given in \cite{Wr}. We begin by recalling this latter class of fibrewise spaces.


\begin{definition}\label{fulc} Let $X$ be a fibrewise space over $B$. Then $X$ is said to be fibrewise uniformly locally contractible if there exist $W$ an open neighborhood of the diagonal in $X\times _B X$ and a fibrewise homotopy $G:I_B(W)\rightarrow X$ such that $G(x,y,0)=x,$ $G(x,y,1)=y$ for all $(x,y)\in W$, and $G(x,x,t)=x$ for all $x\in X$ and $t\in I.$
\end{definition}

By Proposition 5.16 in \cite{C-J} any \emph{fibrewise ANR} space (in particular any \emph{fibrewise ENR} space) is a fibrewise uniformly locally contractible space. For the notion of fibrewise ANR (resp. ENR) space see definition 5.5 in \cite{C-J}. Basic examples of fibrewise ANR spaces are bundles of normed vector spaces and fibre bundles whose fibres are ANR spaces. Moreover, basic examples of fibrewise ENR spaces are finite-dimensional real vector bundles over $B$ and fibre bundles whose fibres are ENR spaces. 

Another interesting example of fibrewise ANR space is given by any Hurewicz fibration $p_X:X\rightarrow B$ where $X$ is an ordinary ANR space (Prop. 5.20 in \cite{C-J}).

Now we see the following interesting property. Recall that any fibrewise cofibration $A\rightarrow X$ is, actually, an embedding, and therefore we have a fibrewise cofibred pair $(X,A)$. Fibrewise cofibred pairs are characterized by the existence of \emph{fibrewise Str{\o}m structures} (see \cite{C-J} or \cite{GC}). Namely, a fibrewise pair $(X,A)$ is a fibrewise cofibred pair if, and only if,  
$(X,A)$ admits a fibrewise Str{\o}m structure, that is, a pair $(\varphi ,H)$ consisting of:
\begin{enumerate}

\item[(i)] A map $\varphi :X\rightarrow I$ satisfying $A\subseteq \varphi
^{-1}(\{0\});$

\item[(ii)] A fibrewise homotopy $H:I_B(X)\rightarrow X$
satisfying $H(x,0)=x,$ $H(a,t)=a$ for all $x\in X,$ $a\in A,$
$t\in I$, and $H(x,t)\in A$ whenever $t>\varphi (x).$
\end{enumerate}
If $A$ is closed, then necessarily $A=\varphi ^{-1}(\{0\}).$

\medskip
\begin{proposition}
Any fibrewise locally equiconnected space is fibrewise uniformly locally contractible.
\end{proposition}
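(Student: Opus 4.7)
The plan is to invoke the characterization of fibrewise cofibrations via fibrewise Strøm structures, which has just been recalled, applied to the closed fibrewise cofibration $\Delta_X : X \hookrightarrow X\times_B X$. That structure provides a pair $(\varphi, H)$ with $\varphi : X\times_B X \to I$ such that $\varphi^{-1}(\{0\}) = \Delta_X(X)$ and a fibrewise homotopy $H : I_B(X\times_B X) \to X\times_B X$ satisfying $H((x,y),0) = (x,y)$, $H((x,x),t) = (x,x)$ for all $x\in X$, $t\in I$, and $H((x,y),t) \in \Delta_X(X)$ whenever $t > \varphi(x,y)$.

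The candidate neighborhood is $W := \{(x,y)\in X\times_B X : \varphi(x,y) < 1\}$, which is open by continuity of $\varphi$ and contains the diagonal because $\varphi$ vanishes there. Writing componentwise $H((x,y),s) = (\alpha_{(x,y)}(s), \beta_{(x,y)}(s))$, both coordinate paths live in the single fibre of $p_X$ through $x$ and $y$, and for $(x,y)\in W$ the endpoint condition $H((x,y),1)\in\Delta_X(X)$ forces $\alpha_{(x,y)}(1) = \beta_{(x,y)}(1)$. This lets me define $G : I_B(W) \to X$ by pasting the two coordinate paths at their common endpoint, namely
$$
G(x,y,t) := \begin{cases} \alpha_{(x,y)}(2t), & 0 \le t \le 1/2, \\ \beta_{(x,y)}(2-2t), & 1/2 \le t \le 1. \end{cases}
$$

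I would then verify the three defining conditions of Definition \ref{fulc}. Continuity is by the pasting lemma on the closed cover $\{[0,1/2],[1/2,1]\}$ using that $\alpha_{(x,y)}(1) = \beta_{(x,y)}(1)$ on $W$. The evaluations $G(x,y,0) = \alpha_{(x,y)}(0) = x$ and $G(x,y,1) = \beta_{(x,y)}(0) = y$ follow from $H(-,0) = \mathrm{id}$. The constancy $G(x,x,t) = x$ follows from the Strøm condition $H((x,x),s) = (x,x)$, which yields $\alpha_{(x,x)} \equiv \beta_{(x,x)} \equiv x$. Finally, $G$ is a fibrewise map because $H$ is fibrewise, so both $\alpha_{(x,y)}(s)$ and $\beta_{(x,y)}(s)$ remain in the fibre over $p_X(x) = p_X(y)$.

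There is no serious obstacle here; the only point requiring care is the choice of threshold in $W$. One might be tempted to set $W = \{\varphi < \varepsilon\}$ for small $\varepsilon$, but the Strøm property only guarantees that $H$ has landed in the diagonal by time $t=1$, which is why the correct neighborhood is $\{\varphi < 1\}$ and why one evaluates at $s=1$ in the pasting above.
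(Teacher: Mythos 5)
Your proof is correct and follows essentially the same route as the paper: take a fibrewise Str{\o}m structure $(\varphi,H)$ for $(X\times_B X,\Delta_X(X))$, set $W=\varphi^{-1}([0,1[)$, and concatenate the two coordinate paths of $H$ (the paper writes them as $pr_1\circ H$ and $pr_2\circ H$) with the same reparametrization $2t$, $2-2t$. Your explicit remark that the two pieces agree at $t=1/2$ precisely because $(x,y)\in W$ forces $H((x,y),1)\in\Delta_X(X)$ is a point the paper leaves implicit, and it is handled correctly.
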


\begin{proof}
Let $X$ be a fibrewise locally equiconnected space and consider $(\varphi ,H)$ a fibrewise Str{\o}m structure for the fibrewise cofibred pair $(X\times _BX,\Delta _X(X))$. We define the open subset $W:=\varphi ^{-1}([0,1[)$. Clearly, $W$ is an open neighborhood of $\Delta _X(X)$ in $X\times _BX.$ We also define the fibrewise homotopy $G:I_B(U)\rightarrow X$ as
$$G(x,y,t):=\begin{cases} 
pr_1(H(x,y,2t)), & 0\leq t\leq \frac{1}{2} \\
pr_2(H(x,y,2-2t)) & \frac{1}{2}\leq t\leq 1
\end{cases}$$
\noindent where $pr_1,pr_2:X\times _BX\rightarrow X$ are the corresponding projections. Then, one can straightforwardly check that $G(x,y,0)=x,$ $G(x,y,1)=y$ for all $(x,y)\in W$, and $G(x,x,t)=x$ for all $x\in X$ and $t\in I.$
\end{proof}

Now our aim is to give a converse of the proposition above. We need a previous lemma.

\begin{lemma}\label{clave}
Let $(X,A)$ a fibrewise pair where $A$ is closed in $X$. Assume that there exist a map $\pi :X\rightarrow [0,1]$, an open subset $U\subseteq X$ with $A=\pi ^{-1}(\{0\})\subseteq \pi ^{-1}(]0,1])\subseteq U$ (in particular $U$ is a halo of $A$) and a fibrewise homotopy $G:I_B(U)\rightarrow X$ such that $G(x,0)=x,$ $G(a,t)=a$ and $G(x,1)\in A,$ for all $x\in X,$ $a\in A$, $t\in I.$ Then $(X,A)$ is a fibrewise cofibred pair.
\end{lemma}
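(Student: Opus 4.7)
The plan is to construct a fibrewise Str{\o}m structure $(\varphi, H)$ for the closed pair $(X, A)$, namely a map $\varphi: X \to I$ with $A = \varphi^{-1}(\{0\})$ and a fibrewise homotopy $H: I_B(X) \to X$ such that $H(x, 0) = x$, $H(a, t) = a$ for $a \in A$, and $H(x, t) \in A$ whenever $t > \varphi(x)$. By the characterization of closed fibrewise cofibrations via fibrewise Str{\o}m structures recalled just before the statement, producing such a pair $(\varphi, H)$ completes the proof. Note that the hypothesis forces $A\cup \pi^{-1}(]0,1])\subseteq U$, so in fact $G$ is defined on all relevant points of $X\times I$ and the halo in practice behaves as $X$ itself.

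The natural candidate is $\varphi := \pi$, which already satisfies $A = \varphi^{-1}(\{0\})$ by hypothesis. The homotopy $H$ is then obtained by reparameterizing the given deformation $G$ using $\pi$, via the standard Str{\o}m/NDR trick:
\[
H(x, t) := \begin{cases} G\bigl(x,\ \min(1,\ t/\pi(x))\bigr) & \text{if } \pi(x) > 0,\\ x & \text{if } \pi(x) = 0.\end{cases}
\]
The three Str{\o}m conditions then follow directly: $H(x, 0) = G(x, 0) = x$ where $\pi(x) > 0$, and $H(x, 0) = x$ where $\pi(x) = 0$; $H(a, t) = a$ since $a \in A$ means $\pi(a) = 0$; and for $t > \varphi(x) = \pi(x) > 0$, one has $\min(1, t/\pi(x)) = 1$, giving $H(x, t) = G(x, 1) \in A$, while if $\pi(x) = 0$ then $x \in A$ and $H(x, t) = x \in A$.

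The main obstacle to verify is continuity of $H$ at points of $A$, where the formula switches between the two branches. Fix $a \in A$ and take a sequence $(x_n, t_n) \to (a, t)$. If $t = 0$, then $H(x_n, 0) = x_n \to a = H(a, 0)$, using either branch. If $t > 0$, then $\pi(x_n) \to 0^+$, so $t_n/\pi(x_n) \to \infty$; eventually $\min(1, t_n/\pi(x_n)) = 1$, and since $G$ is continuous with $G(x_n, 1) \in A$, one has $H(x_n, t_n) = G(x_n, 1) \to G(a, 1) = a$, matching $H(a, t) = a$. On the open set $\{\pi > 0\}$, $H$ is a continuous composition of continuous functions, so the remaining continuity checks are routine. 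The subtle point is precisely the use of the fact that the reparameterization stabilizes the inner argument at $1$ once $t$ exceeds $\pi(x)$, which both enforces the Str{\o}m condition and provides the required limit into $A$.

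Finally, the fibrewise character of $H$ is immediate since $G$ is a fibrewise homotopy and the reparameterization involves only the scalar functions $\pi$ and the $I$-coordinate: one has $p_X \circ H(x, t) = p_X \circ G(x, \cdot) = p_X(x)$ on $\{\pi > 0\}$, and $p_X \circ H(x, t) = p_X(x)$ trivially on $A$. Thus $(\varphi, H)$ is a fibrewise Str{\o}m structure for the closed pair $(X, A)$, and $(X, A)$ is a fibrewise cofibred pair.
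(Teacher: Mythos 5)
The central problem is your reading of the hypotheses. As printed, the chain $A=\pi^{-1}(\{0\})\subseteq\pi^{-1}(]0,1])\subseteq U$ is inconsistent (taken literally it forces $A=\emptyset$), and you resolved it by concluding $U=X$, so that $G$ is globally defined. But the paper's own proof (case (v) of the retraction, $r(x,t)=(x,t)$ when $\pi(x)=1$) and the way the lemma is applied afterwards (where $\Delta_X(X)=\pi^{-1}(\{1\})$ and $W=\pi^{-1}(]0,1])$) show the intended hypotheses are $A=\pi^{-1}(\{1\})$ and $\pi^{-1}(]0,1])\subseteq U$, with $G$ defined only on $I_B(U)$ and $U$ in general a \emph{proper} open halo of $A$. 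Your reading removes exactly the difficulty the lemma exists to handle: globalizing a deformation that is only given near $A$. Your formula $H(x,t)=G\bigl(x,\min(1,t/\pi(x))\bigr)$ runs the deformation at full strength precisely where $\pi$ is small, i.e.\ (in the intended reading) far from $A$ and arbitrarily close to the region where $G$ is not even defined, so it cannot be adapted to the halo situation; the missing idea is the paper's damping of the homotopy by the haloing function, as in $r(x,t)=(G(x,2\pi(x)t),0)$ for $0<\pi(x)\le\tfrac12$, which dies out to the identity at level $0$ as $\pi(x)\to 0$ and therefore extends by $(x,0)$ over $\{\pi=0\}$. Note also that the paper's target is different: it does not build a Str{\o}m structure but a fibrewise retraction of $I_B(X)$ onto $X\times\{0\}\cup A\times I$, invoking Prop.~4.1 of Crabb--James.

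Even for the statement you actually prove (with $G$ defined on all of $I_B(X)$ and $A=\pi^{-1}(\{0\})$), the construction is reasonable but the one point that needs care --- continuity at $(a,0)$, $a\in A$ --- is mishandled: for $(x_n,t_n)\to(a,0)$ you may not assume $t_n=0$, and on indices with $\pi(x_n)>0$ the ratio $t_n/\pi(x_n)$ is completely indeterminate, so the inner time can be anything in $[0,1]$; moreover, sequential arguments do not establish continuity for arbitrary topological spaces. The correct repair is short and uses that $G(\{a\}\times I)=\{a\}$: given open $V\ni a$, the tube lemma yields an open $N\ni a$ with $G(N\times I)\subseteq V$, and replacing $N$ by $N\cap V$ gives $H(N\times I)\subseteq V$ on both branches. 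With that fix, your $(\varphi,H)$ is indeed a fibrewise Str{\o}m structure and gives a genuinely different (Str{\o}m-structure rather than retraction) proof of the \emph{global} version, but it does not prove the lemma in the halo generality in which the paper states, proves and uses it.
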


\begin{proof}
We use the fact that $(X,A)$ is a fibrewise cofibred pair if, and only if, $X\times \{0\}\cup A\times I$ is a fibrewise retract of $I_B(X)$ (Prop 4.1 in \cite{C-J}). The fibrewise retraction $r:I_B(X)\rightarrow X\times \{0\}\cup A\times I$ is given by the following description:
\begin{enumerate}
\item[(i)] $r(x,t)=(x,0),$ if $\pi (x)=0;$

\item[(ii)] $r(x,t)=(G(x,2\pi (x)t),0),$ if $0<\pi (x)\leq \frac{1}{2};$

\item[(iii)] $r(x,t)=(G(x,\frac{t}{2(1-\pi (x))}),0),$ if $\frac{1}{2}\leq \pi (x)<1$ and $0\leq t\leq 2(1-\pi (x));$

\item[(iv)] $r(x,t)=(G(x,1),t-2(1-\pi (x))),$ if $\frac{1}{2}\leq \pi (x)<1$ and $2(1-\pi (x))\leq t\leq 1;$

\item[(v)] $r(x,t)=(x,t),$ if $\pi (x)=1.$

\end{enumerate}
\end{proof}

For our result we consider the notion of \emph{fibrewise Hausdorff} space. This is just a fibrewise space $X$ in which the diagonal map $\Delta _X:X\rightarrow X\times _BX$ is closed. Equivalently, for each $b\in B$ and each pair of distinct points $x,x'$ lying in the same fibre over $b$, there exist disjoint neighborhoods of $x,x'$ in $X.$ In particular, if $X$ is Hausdorff, then it is fibrewise Hausdorff.

\begin{theorem}
Let $X$ be a fibrewise space over $B$ where $X$ is fibrewise Hausdorff and $X\times _BX$ is perfectly normal. Then $X$ is fibrewise locally equiconnected if, and only if, $X$ is fibrewise uniformly locally contractible.
\end{theorem}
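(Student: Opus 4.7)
The implication ($\Rightarrow$) is exactly the Proposition preceding Lemma \ref{clave}, so the plan is to establish the converse. Assume $X$ is fibrewise Hausdorff, $X\times_B X$ is perfectly normal, and $X$ is fibrewise uniformly locally contractible; I will verify the hypotheses of Lemma \ref{clave} for the pair $(X\times_B X,\Delta_X(X))$, which immediately yields that $\Delta_X\colon X\to X\times_B X$ is a closed fibrewise cofibration and hence that $X$ is fibrewise locally equiconnected.

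First, the fibrewise Hausdorff hypothesis on $X$ says exactly that $\Delta_X(X)$ is closed in $X\times_B X$, so the closedness requirement of Lemma \ref{clave} is free. Next, Definition \ref{fulc} supplies an open neighbourhood $W$ of $\Delta_X(X)$ in $X\times_B X$ together with a fibrewise homotopy $G_0\colon I_B(W)\to X$ satisfying $G_0(x,y,0)=x$, $G_0(x,y,1)=y$ and $G_0(x,x,t)=x$. I repackage this as a fibrewise deformation of $W$ onto $\Delta_X(X)$ inside $X\times_B X$ by setting
$$\widetilde{G}\colon I_B(W)\longrightarrow X\times_B X,\qquad \widetilde{G}(x,y,t):=(x,G_0(x,y,1-t)).$$
Since $G_0$ is fibrewise, $(x,G_0(x,y,1-t))$ does lie in $X\times_B X$, and one checks at once that $\widetilde{G}(x,y,0)=(x,y)$, $\widetilde{G}(x,x,t)=(x,x)$, and $\widetilde{G}(x,y,1)=(x,x)\in\Delta_X(X)$, exactly matching the three homotopy conditions of Lemma \ref{clave}.

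The remaining ingredient is a continuous $\pi\colon X\times_B X\to[0,1]$ with $\pi^{-1}(\{0\})=\Delta_X(X)$ and $\{\pi<1\}\subseteq W$. This is precisely where perfect normality is needed: in a perfectly normal space every closed subset is the zero set of a continuous $[0,1]$-valued map. Applying this to the disjoint closed sets $\Delta_X(X)$ and $(X\times_B X)\setminus W$ produces continuous $f_0,f_1\colon X\times_B X\to[0,1]$ with $f_0^{-1}(0)=\Delta_X(X)$ and $f_1^{-1}(0)=(X\times_B X)\setminus W$. Because these zero sets are disjoint, $f_0+f_1$ never vanishes, so $\pi:=f_0/(f_0+f_1)$ is well defined and satisfies $\pi^{-1}(\{0\})=\Delta_X(X)$ and $\pi^{-1}(\{1\})=(X\times_B X)\setminus W$; in particular $\pi^{-1}([0,1[)=W$, which is exactly the halo requirement of Lemma \ref{clave} with $U=W$.

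With the data $(\pi,W,\widetilde{G})$ now in hand, Lemma \ref{clave} endows $(X\times_B X,\Delta_X(X))$ with a fibrewise Str{\o}m structure, so $\Delta_X$ is a closed fibrewise cofibration and therefore $X$ is fibrewise locally equiconnected. The main technical point is the construction of $\pi$ with simultaneous control of both its zero set and its unit set, which is precisely what perfect normality (rather than plain normality) buys; once $\pi$ is available, the contracting homotopy $G_0$ provided by fibrewise uniform local contractibility slots directly into the template of Lemma \ref{clave} after the minor repackaging above.
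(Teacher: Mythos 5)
Your argument is correct and is essentially the paper's proof: both reduce the converse to Lemma \ref{clave} applied to the pair $(X\times_B X,\Delta_X(X))$, with fibrewise Hausdorffness giving closedness of the diagonal, perfect normality producing the map $\pi$ with prescribed zero and unit sets, and the homotopy of Definition \ref{fulc} repackaged as a fibrewise deformation of $W$ into the diagonal rel the diagonal (the paper takes $(H(x,y,t),y)$ where you take $(x,G_0(x,y,1-t))$; either works). One small adjustment: the normalization actually used in the retraction formula of Lemma \ref{clave} (whose statement contains a typo) is $A=\pi^{-1}(\{1\})$ with $\pi^{-1}(]0,1])\subseteq U$, so you should hand the lemma $1-\pi$ rather than your $\pi$, after which your proof coincides with the paper's.
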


\begin{proof}
If $X$ is a fibrewise uniformly locally contractible space then we can consider $W$ an open neighborhood of $\Delta _X(X)$ in $X\times _BX$ and a fibrewise homotopy $H:I_B(W)\rightarrow X$ satisfying the conditions in Definition \ref{fulc}. As $X\times _BX$ is perfectly normal and $\Delta _X(X)$, $(X\times _BX)\setminus W$ are disjoint closed subsets in $X\times _BX$, we can find a map $\pi :X\times _BX\rightarrow [0,1]$ such that $\Delta _X(X)=\pi ^{-1}(\{1\})$ and $W=\pi ^{-1}(]0,1]).$ The fibrewise homotopy $G:I_B(W)\rightarrow X\times _BX$, defined as $G(x,y,t):=(H(x,y,t),y)$, satisfies the conditions in Lemma \ref{clave} above. Therefore, $\Delta _X:X\rightarrow X\times _BX$ is a closed fibrewise cofibration.
\end{proof}

\begin{remark}
The conditions of $X$ being fibrewise Hausdorff and $X\times _BX$ perfectly normal are not restrictive. For instance, when $X$ is metrizable such two conditions hold.  
\end{remark}

\begin{corollary}
Any fibrewise ANR is fibrewise locally equiconnected. In particular, the following classes of fibrewise spaces are fibrewise locally equiconnected:
\begin{enumerate}
\item[(i)] Fibrewise ENR spaces.

\item[(ii)] Hurewicz fibrations $p_X:X\rightarrow B$ where $X$ is an ordinary ANR space.

\item[(iii)] Fibre bundles whose fibres are ordinary ANR spaces.

\item[(iv)] Bundles of normed vector spaces.

\end{enumerate}
\end{corollary}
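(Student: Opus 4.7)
The plan is to reduce the entire corollary to the Theorem just proved above, via \cite[Prop.~5.16]{C-J}. That Theorem says that a fibrewise Hausdorff space $X$ with $X\times _BX$ perfectly normal is fibrewise locally equiconnected as soon as it is fibrewise uniformly locally contractible. Hence the main statement, that every fibrewise ANR is fibrewise locally equiconnected, will drop out of two ingredients: (a) every fibrewise ANR is fibrewise uniformly locally contractible, which is exactly \cite[Prop.~5.16]{C-J}; and (b) the point-set hypotheses of the Theorem hold automatically for a fibrewise ANR in the sense of \cite{C-J}. For (b) I would appeal to the fact that in \cite{C-J} fibrewise ANRs are set up within the framework of fibrewise metrizable spaces, and fibrewise metrizability forces $X$ to be fibrewise Hausdorff and $X\times _BX$ to be perfectly normal, as already pointed out in the Remark immediately preceding this corollary.

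Once the main statement is in hand, each of the four bulleted consequences reduces to identifying the given class as fibrewise ANRs. For (i), every fibrewise ENR is by definition a fibrewise ANR (see \cite[Def.~5.5]{C-J}). For (ii), this is exactly \cite[Prop.~5.20]{C-J}, which states that a Hurewicz fibration with total space an ordinary ANR is a fibrewise ANR. For (iii) and (iv), both fibre bundles with ANR fibres and bundles of normed vector spaces were recalled, in the paragraph following Definition \ref{fulc}, to be standard examples of fibrewise ANRs in \cite{C-J}.

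The only real obstacle in this argument is verifying the point-set conditions (fibrewise Hausdorff plus perfect normality of $X\times _BX$) for each of the given examples. Since these conditions are automatic in the fibrewise metrizable setting used in \cite{C-J}, no genuine difficulty arises; at worst one has to check that the ambient conventions on $B$ do not alter this. If one prefers a hypothesis-free statement, one may simply add the blanket assumption that $X$ is fibrewise metrizable, which is standard for all four classes considered.
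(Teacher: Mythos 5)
Your proposal is correct and follows essentially the same route the paper intends: combine \cite[Prop.~5.16]{C-J} (fibrewise ANR $\Rightarrow$ fibrewise uniformly locally contractible) with the Theorem, using the metrizability built into the fibrewise ANR setting of \cite{C-J} to guarantee the fibrewise Hausdorff and perfect normality hypotheses (as in the Remark preceding the corollary), and then identify (i)--(iv) as fibrewise ANRs via the examples recalled after Definition \ref{fulc} and \cite[Prop.~5.20]{C-J}. The only nitpick is the phrase ``fibrewise metrizability forces $X\times_BX$ perfectly normal'': the Remark (and the standard argument) uses ordinary metrizability of $X$, which is what the \cite{C-J} framework supplies, so the conclusion stands.
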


We end this appendix by giving some interesting properties on fibrewise locally equiconnected spaces.

\begin{proposition}
Let $X$ be a fibrewise locally equiconnected space. Suppose a map $f:X\rightarrow [0,1]$ such that $A:=f^{-1}(\{0\})$ is a fibrewise retract of $f^{-1}([0,1[).$ Then $(X,A)$ is a fibrewise cofibred pair.
\end{proposition}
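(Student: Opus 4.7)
The plan is to reduce to Lemma \ref{clave}: I will produce a continuous ``halo'' function $\tau:X\rightarrow [0,1]$ with $\tau^{-1}(\{0\})=A$ and $\tau^{-1}([0,1[)\subseteq V$ for some open neighbourhood $V$ of $A$, together with a fibrewise homotopy $G:I_B(V)\rightarrow X$ satisfying $G(x,0)=x$, $G(a,t)=a$ for $a\in A$, and $G(x,1)\in A$. The raw materials I will combine are the hypothesised fibrewise retraction $r:U'\rightarrow A$, where $U':=f^{-1}([0,1[)$, together with the data coming from $X$ being fibrewise locally equiconnected: a fibrewise Str{\o}m structure $(\varphi_\Delta,H_\Delta)$ for the closed fibrewise cofibred pair $(X\times_BX,\Delta_X(X))$ and, as in the proof of the proposition that equiconnected implies uniformly locally contractible, the open set $W:=\varphi_\Delta^{-1}([0,1[)$ together with a fibrewise homotopy $G_0:I_B(W)\rightarrow X$ satisfying $G_0(x,y,0)=x$, $G_0(x,y,1)=y$ and $G_0(x,x,t)=x$.

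First I will build $V$ and $G$. Set $V:=\{x\in U':(x,r(x))\in W\}$, which is open in $X$ as the preimage of $W$ under the continuous map $U'\rightarrow X\times_BX$, $x\mapsto (x,r(x))$, and which contains $A$ because $r|_A=\mathrm{id}_A$ and $\Delta_X(X)\subseteq W$. Define $G:I_B(V)\rightarrow X$ by $G(x,t):=G_0(x,r(x),t)$. Fibrewiseness of $r$ and $G_0$ makes $G$ fibrewise, and the listed properties of $G_0$ together with $r|_A=\mathrm{id}_A$ yield at once $G(x,0)=x$, $G(x,1)=r(x)\in A$, and $G(a,t)=G_0(a,a,t)=a$ for every $a\in A$.

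Next I will construct $\tau$ by gluing the natural candidate $\varphi_\Delta(x,r(x))$, defined only on $U'$, with the constant $1$ outside $U'$, using $f$ itself as a cut-off: set $\tau(x):=\max\{f(x),\varphi_\Delta(x,r(x))\}$ for $x\in U'$ and $\tau(x):=1$ for $x\notin U'$. Continuity at a point $y\notin U'$ holds because $f(x)\rightarrow f(y)=1$ as $x\rightarrow y$ (since $U'=f^{-1}([0,1[)$ forces $f\equiv 1$ off $U'$), so $\tau(x)\rightarrow 1$ irrespective of the $\varphi_\Delta$ term. One then checks $\tau^{-1}(\{0\})=A$ (if $\tau(x)=0$ then $f(x)=0$, hence $x\in A$ and $r(x)=x$, and $\varphi_\Delta(x,x)=0$ is automatic from $\Delta_X(X)=\varphi_\Delta^{-1}(\{0\})$) and $\tau^{-1}([0,1[)\subseteq V$ (if $\tau(x)<1$ then $x\in U'$ and $\varphi_\Delta(x,r(x))<1$, i.e., $(x,r(x))\in W$). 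Applying Lemma \ref{clave} with $\pi:=\tau$, open set $V$, and homotopy $G$ then gives that $(X,A)$ is a fibrewise cofibred pair.

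The hard part is precisely this gluing. The natural halo-type function $\varphi_\Delta(\cdot,r(\cdot))$ shrinks the neighbourhood from $U'$ down to $V$ but lives only on $U'$ and need not extend continuously to $X$; conversely $f$ is continuous on $X$ but is a halo function only for $U'$, which is too large for the homotopy $G$ to be defined on. Taking the maximum of the two, and declaring the result to be $1$ on $X\setminus U'$, inherits continuity from $f$ at the boundary of $U'$ while inheriting the correct support from $\varphi_\Delta(\cdot,r(\cdot))$.
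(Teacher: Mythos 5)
Your proof is correct and follows essentially the same route as the paper: your $\tau$ is exactly the paper's $\varphi_f$ (the maximum of $f$ and $\varphi(\cdot\,,r(\cdot))$, set equal to $1$ where $f=1$), your $G(x,t)=G_0(x,r(x),t)$ is the paper's $G_f$ built from the Str{\o}m structure of $(X\times_BX,\Delta_X(X))$, and both arguments conclude via Lemma \ref{clave}. The only differences are cosmetic: you make the continuity of the glued function and the choice of the open set $V$ explicit, details the paper leaves to the reader.
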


\begin{proof}
Consider $r:f^{-1}([0,1[)\rightarrow A$ a fibrewise retraction and let $(\varphi, H)$ be a fibrewise Str{\o}m structure for the fibrewise cofibred pair $(X\times _BX, \Delta _X(X)).$ We define $U:=\varphi ^{-1}([0,1[)$ and a fibrewise homotopy $G:I_B(U)\rightarrow X$ by 
$$G(x,y,t):=\begin{cases}
pr_1(H(x,y,2t)), & 0\leq t\leq \frac{1}{2} \\
pr_2(H(x,y,2-2t)), & \frac{1}{2}\leq t\leq 1
\end{cases}$$
\noindent where $pr_1,pr_2$ denote the corresponding projections $X\times _BX\rightarrow X.$ We also define a map $\varphi _f:X\rightarrow [0,1]$ as 
$$\varphi _f(x):=\begin{cases}
\max \{f(x),\varphi (x,r(x))\}, & f(x)<1 \\
1, & f(x)=1
\end{cases}$$
We have that $\varphi _f^{-1}(\{0\})=A.$ Moreover, the fibrewise homotopy $$G_f:I_B(\varphi _f^{-1}([0,1[))\rightarrow X$$ \noindent 
defined as $G_f(x,t):=G(x,r(x),t)$ satisfies all the requirements in Lemma \ref{clave} above. Therefore, $(X,A)$ is a closed fibrewise cofibred pair. 
\end{proof}

\begin{corollary}
Let $X$ be a fibrewise locally equiconnected space and consider $e:X\rightarrow X$ a fibrewise map such that $e\circ e=e.$ Then, $(X,e(X))$ is a closed fibrewise cofibred pair.
\end{corollary}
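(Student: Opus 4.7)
The plan is to apply the previous proposition with $A=e(X)$. This reduces the task to two points: producing a continuous map $f\colon X\to[0,1]$ with $f^{-1}(\{0\})=e(X)$, and exhibiting a fibrewise retraction $f^{-1}([0,1[)\to e(X)$. The retraction is immediate: since $e\circ e=e$, every point $y\in e(X)$ satisfies $e(y)=y$, so $e$ restricts to the identity on $e(X)$ and therefore $e\colon X\to e(X)$ is itself a fibrewise retraction; its further restriction to $f^{-1}([0,1[)$ gives what the proposition demands.

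To construct $f$, I would exploit the hypothesis that $\Delta_X\colon X\to X\times_BX$ is a closed fibrewise cofibration. Fix a fibrewise Str{\o}m structure $(\varphi,H)$ for the closed fibrewise cofibred pair $(X\times_BX,\Delta_X(X))$; since $\Delta_X(X)$ is closed, $\Delta_X(X)=\varphi^{-1}(\{0\})$. Then set
$$f(x):=\varphi(x,e(x)).$$
The pair $(x,e(x))$ genuinely lies in $X\times_BX$ because $e$ is fibrewise, i.e.\ $p_X\circ e=p_X$; hence the map $x\mapsto(x,e(x))$ is continuous into $X\times_BX$ and $f$ is continuous as a composite. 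The identification $f^{-1}(\{0\})=e(X)$ then follows from the chain of equivalences
$$f(x)=0\iff (x,e(x))\in\Delta_X(X)\iff e(x)=x\iff x\in e(X),$$
where the last step uses the idempotency of $e$. In particular, $e(X)$ is closed in $X$.

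With these ingredients in place, the previous proposition applies directly and delivers the conclusion that $(X,e(X))$ is a closed fibrewise cofibred pair. The main obstacle is conceptual rather than computational: one must recognise that the cofibration condition on the diagonal $\Delta_X$ can be pulled back along the continuous fibrewise map $x\mapsto(x,e(x))$ to build an auxiliary function whose zero set cuts out precisely the fixed-point set $e(X)$. Once this transport is identified, the rest of the verification is routine and amounts only to checking continuity and the fibrewise condition on $e$.
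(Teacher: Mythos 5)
Your proposal is correct and coincides with the paper's own argument: the same auxiliary map $f(x)=\varphi(x,e(x))$ built from a fibrewise Str{\o}m structure for $(X\times_BX,\Delta_X(X))$, the same retraction $r(x)=e(x)$ on $f^{-1}([0,1[)$, and the same appeal to the preceding proposition with $A=e(X)$. Your explicit verification that $f^{-1}(\{0\})=e(X)$ (using closedness of $\Delta_X(X)$ and idempotency of $e$) is a welcome, if routine, elaboration of what the paper leaves implicit.
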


\begin{proof}
If $(\varphi ,H)$ is a fibrewise Str{\o}m structure for $(X\times _BX,\Delta _X(X),$ then define $f:X\rightarrow [0,1]$ as $f(x):=\varphi (x,e(x)).$ Then we have that $e(X)=f^{-1}(\{0\})$ which is a fibrewise retract of $f^{-1}([0,1[)$ through the fibrewise retraction $r:f^{-1}([0,1[)\rightarrow e(X)$ defined as $r(x):=e(x).$
\end{proof}

\begin{corollary}\label{wp}
Let $X$ be a fibrewise locally equiconnected space and $A\subseteq X.$ If $A$ if a fibrewise retract of $X$, then $(X,A)$ is a closed fibrewise cofibred pair. 
\end{corollary}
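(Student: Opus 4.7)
The plan is to reduce this statement directly to the previous corollary, which says that for any fibrewise idempotent $e:X\to X$ (with $e\circ e=e$) on a fibrewise locally equiconnected space $X$, the pair $(X,e(X))$ is a closed fibrewise cofibred pair. So the whole task is to manufacture a suitable idempotent out of the retraction data.

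More concretely, by hypothesis we have a fibrewise retraction $r:X\to A$, meaning $r$ is a fibrewise map and $r\circ i = 1_A$, where $i:A\hookrightarrow X$ denotes the inclusion (which is itself a fibrewise map since $A$ inherits its projection from $X$). First I would set $e := i\circ r : X\to X$. This is a fibrewise map as a composition of fibrewise maps. Then I would verify it is idempotent:
$$e\circ e = (i\circ r)\circ (i\circ r) = i\circ (r\circ i)\circ r = i\circ 1_A\circ r = i\circ r = e.$$
Next I would identify the image: $e(X) = i(r(X)) = i(A) = A$, where I use that $r$ is surjective onto $A$ (which follows from $r\circ i=1_A$). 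So $e(X)$ coincides with $A$ as a subset of $X$.

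At this point the conclusion is immediate by invoking the previous corollary applied to $e$: since $X$ is fibrewise locally equiconnected and $e:X\to X$ is a fibrewise idempotent, the pair $(X,e(X)) = (X,A)$ is a closed fibrewise cofibred pair, which is exactly what we need. There is no real obstacle here; the only minor point worth double-checking is that $i:A\hookrightarrow X$ is genuinely a morphism in $\mathbf{Top}_B$, i.e.\ that the projection of $A$ as a subspace is the restriction of $p_X$, but this is built into the definition of a fibrewise retract.
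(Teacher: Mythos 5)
Your proposal is correct and coincides with the paper's own argument: both define the idempotent $e=i\circ r$, note $e(X)=A$, and invoke the preceding corollary on fibrewise idempotents. No issues.
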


\begin{proof}
Denote as $r:X\rightarrow A$ the fibrewise retraction and $i:A\hookrightarrow X$ the inclusion map. Then we have that $e=i\circ r:X\rightarrow X$ is a fibrewise map satisfying $e\circ e=e.$
\end{proof}

\begin{remark}
Observe that, as a consequence of this last result, we have that any fibrewise pointed space $X\in \mathbf{Top}(B)$ is well-pointed (i.e., the section $s_X$ is a closed fibrewise cofibration) as long as $X$ is a fibrewise locally equiconnected space over $B$.
\end{remark}

\end{document}